\definecolor{aleacolor}{rgb}{0.16,0.59,0.78}
\renewcommand{\cite}{\citet}
\theoremstyle{plain}
\newtheorem{theorem}{Theorem}[section]
\newtheorem{proposition}[theorem]{Proposition}
\newtheorem{lemma}[theorem]{Lemma}
\theoremstyle{definition}
\theoremstyle{remark}
\newtheorem{remark}[theorem]{Remark}
\newtheorem{example}[theorem]{Example}
\newcounter{oldeq}
\newcounter{usesofarxiv}
 \newcommand{\arxiv}[1]{
\setcounter{oldeq}{\value{equation}}
 \addtocounter{usesofarxiv}{1}
 \setcounter{equation}{0}
\def\theoldeq{\theequation}
\def\theequation{x-\arabic{usesofarxiv}.\arabic{equation}}
\def\theequation{\arabic{section}.\arabic{usesofarxiv}.\arabic{equation}}
\def\theequation{\thesection.\arabic{usesofarxiv}.\arabic{equation}}
  \colorlet{shadecolor}{gray!10}
{
\begin{shaded}
\footnotesize
#1  \normalsize
\end{shaded}
   \setcounter{equation}{\value{oldeq}}
\numberwithin{equation}{section}
}}
\newcommand{\longcomment}[1]{}
\renewcommand{\longcomment}[1]{\ovalbox{\begin{minipage}{.9\textwidth}\color{blue}#1\end{minipage}}}
\def\fddto{\stackrel{\rm f.d.d.}{\Longrightarrow}}
\newcommand{\ind}{{\bf 1}}
\def\inddd#1{{\ind}_{\left\{#1\right\}}}
\newcommand{\esp}{{\mathbb E}}
\newcommand{\erfc}{{\rm erfc}}
\def\g{{\mathsf q}}
\def\p{{\mathsf p}}
\newcommand{\eqnh}{\begin{eqnarray*}}
\newcommand{\eqne}{\end{eqnarray*}}
\newcommand{\eqnhn}{\begin{eqnarray}}
\newcommand{\eqnen}{\end{eqnarray}}
\newcommand{\equh}{\begin{equation}}
\newcommand{\eque}{\end{equation}}
\def\summ#1#2#3{\sum_{#1 = #2}^{#3}}
\def\prodd#1#2#3{\prod_{#1 = #2}^{#3}}
\def\sif#1#2{\sum_{#1=#2}^\infty}
\def\topp#1{^{(#1)}}
\def\abs#1{\left|#1\right|}
\def\ccbb#1{\left\{#1\right\}}
\def\pp#1{\left(#1\right)}
\def\spp#1{(#1)}
\def\bb#1{\left[#1\right]}
\def\mmid{\;\middle\vert\;}
\def\floor#1{\left\lfloor #1 \right\rfloor}
\def\vv#1{{\boldsymbol #1}}
\def\vvalpha{{\vv\alpha}}
\def\vvbeta{{\vv\beta}}
\def\qmand{\quad\mbox{ and }\quad}
\def\qmwith{\quad\mbox{ with }\quad}
\def\mfa{\mbox{ for all }}
\def\wt#1{\widetilde{#1}}
\def\what#1{\widehat{#1}}
\def\R{{\mathbb R}}
\def\N{{\mathbb N}}
\def\fddto{\xrightarrow{\textit{f.d.d.}}}
\renewcommand{\ind}{{\bf 1}}
\def\inddd#1{{\ind}_{\left\{#1\right\}}}
\newcommand{\hidecomment}[1]{}
\newtheorem{assumption}{Assumption}[section]
\newcommand{\calE}{{\mathcal E}}
\newcommand{\calF}{{\mathcal F}}
\newcommand{\calL}{{\mathcal L}}
\newcommand{\calM}{\mathcal{M}}
\def\<{\langle}
\def\>{\rangle}
\newcommand{\RR}{\mathds{R}}
\newcommand{\ZZ}{\mathds{Z}}
\newcommand{\bfA}{\vv A}
\newcommand{\bfB}{\vv B}
\newcommand{\bfC}{\vv C}
\newcommand{\eps}{\varepsilon}
\newcommand{\EE}{\mathds{E}}
\renewcommand{\Pr}{\mathds{P}}
\def\<{\langle}
\def\>{\rangle}
\def\A{{\mathsf a}}
\def\C{{ \mathsf c}}
\numberwithin{equation}{section}
\date{Tuesday, May 2, 2023}
\title[Fluctuations of random Motzkin paths]{Fluctuations of random Motzkin paths II}
\author{W\l odzimierz Bryc}
\address
{
W\l odzimierz Bryc\\
Department of Mathematical Sciences\\
University of Cincinnati\\
2815 Commons Way\\
Cincinnati, OH, 45221-0025, USA.
}
\email{wlodzimierz.bryc@uc.edu}
\urladdr{\href{https://homepages.uc.edu/~brycwz/}{https://homepages.uc.edu/~brycwz/}}
\author{Yizao Wang}
\address
{
Yizao Wang\\
Department of Mathematical Sciences\\
University of Cincinnati\\
2815 Commons Way\\
Cincinnati, OH, 45221-0025, USA.
}
\email{yizao.wang@uc.edu}
\urladdr{\href{http://homepages.uc.edu/~wangyz/}{http://homepages.uc.edu/~wangyz/}}
\keywords{Motzkin path, scaling limit; phase transition;   Laplace transform}
\subjclass[2020]
{60F05; 
60K35} 
\begin{document}\sloppy

\begin{abstract}
We compute limit fluctuations of random Motzkin paths with arbitrary end-points as the length of the path tends to infinity.

\end{abstract}
\maketitle

\arxiv{This is an expanded version of the paper. It includes additional details, additional references, and it differs from the published version.}
\section{Introduction}

\subsection{Model and main result} %
A  Motzkin path of length $L$ is a sequence of steps on the integer lattice $\ZZ_{\geq 0}\times \ZZ_{\geq 0}$ that starts at point  $(0,n_0)$  with the initial altitude $n_0$ and ends at point $(L, n_L)$ at the final   altitude   $n_L$
for some non-negative integers $n_0,n_L,L$.
The steps can be up, down, or horizontal,   along the vectors $(1,1)$, $(1,-1)$ and $(1,0)$ respectively,
and the path cannot fall below the horizontal axis, see  \citet[Definition V.4, page 319]{flajolet09analytic}
 or \cite{Viennot-1984a}.
We
represent a Motzkin path of length $L\ge 1$ as a sequence of integers $(\gamma_0,\dots,\gamma_L)\in \ZZ_{\ge0}^{L+1}$ such that $|\gamma_k-\gamma_{k-1}|\le 1, k=1,\dots,L$ subject to the non-negativity condition   $\sum_{j=0}^k\gamma_j\geq 0$ for $k=0,1\dots,L$. We say that the $k$-th step of the path is up, down, and horizontal respectively, if $\gamma_k-\gamma_{k-1} = 1, -1, 0$ respectively. By $\calM_{i,j}\topp L$ we denote the family of all Motzkin paths of length $L$ with the initial altitude $\gamma_0=i$ and the final altitude $\gamma_L=j$.
Our goal is to study statistical properties of random Motzkin paths, selected at random from the discrete set
\[
\calM\topp L=\bigcup_{i,j\geq 0} \calM_{i,j}\topp L
\] in the limit as $L\to\infty$.
Our setup generalizes  our previous work
\cite{Bryc-Wang-2019}, where we studied statistical properties of the three counting processes that count the up steps, the horizontal steps, and the down steps for a Motzkin path
$\gamma$ selected at random  uniformly from the set $\calM_{0,0}\topp L$.
To define these counting processes,
we  first introduce the indicators of these steps:
\begin{equation}\label{epsilons}
\eps_k^+\equiv   \eps_k^+(\gamma):=\inddd{\gamma_{k}>\gamma_{k-1}},\quad
\eps_k^-\equiv  \eps_k^-(\gamma):=\inddd{\gamma_{k}<\gamma_{k-1}}, \quad
\eps_k^0 \equiv \eps_k^0(\gamma):=\inddd{\gamma_{k}=\gamma_{k-1}}, \gamma\in \calM\topp L,
\end{equation}
and $k=1,\dots,L$.
For the sake of simplicity we drop the dependence on $\gamma$ of $\varepsilon$'s most of the time.
Then given a path $\gamma$ of length $L$, the counts  of the up steps, down steps and horizontal steps up to position $\floor{xL}$, where $x\in[0,1]$ are then
\begin{equation}\label{U-D-L}
  U_L(x):=\summ k1{\floor{ Lx}} \eps^+_k,\quad
   D_L(x):=\summ k1{\floor {Lx}} \eps^-_k,\quad
    H_L(x):=\summ k1{\floor{Lx}} \eps^0_k, \quad x\in[0,1].
\end{equation}

We introduce a probability measure on $\calM\topp L$ as follows.
For each path $\gamma\in\calM_{i,j}\topp L$,  we define its weight
\[%
  w_\sigma(\gamma):=\sigma^{\summ k1L\varepsilon_k^0}, \quad \gamma\in\calM_{i,j}\topp L, L\in\N.
\]%
Note that with $\sigma=1$ this gives each path the same weight.
Since $\calM_{i,j}\topp L$ is a finite set,
\[
  \mathfrak W_{i,j}\topp L=\sum_{\gamma\in\calM_{i,j}\topp L} w(\gamma), i,j\ge 0,
\]
are well defined.
In addition to the weights of the edges, we also weight the initial and the final altitudes of each path with  geometric weights
\begin{equation}
  \label{geo}
  \alpha_{L,n} :=(\rho_{L,0})^n,\quad \beta_{L,n} :=(\rho_{L,1})^n, \quad n\ge 0,
\end{equation}
with
\begin{equation}
  \label{rho(L)}
\rho_{L,0}=1-\frac\C{\sqrt{L}}
\qmand \rho_{L,1}=1-\frac\A{\sqrt{L}} \quad \mbox{ for some } \A,\C\in\RR, \A+\C>0.
\end{equation}
Namely, the countable set  $\calM\topp L=\bigcup_{i,j\geq 0} \calM_{i,j}\topp L$ becomes a probability space with the discrete probability measure $\Pr_L$ determined by
\begin{equation}\label{Pr0}
\Pr_L(\gamma)\equiv\Pr_{\A,\C,\sigma,L}(\gamma) \equiv \Pr_{\A,\C,\sigma,L}(\{\gamma\}) = \frac{\alpha_{L,\gamma_0}\beta_{L,\gamma_L}}{\mathfrak{C}_{L}} w(\gamma), \mfa \gamma\in \calM\topp L,
\end{equation}
with
\[   \mathfrak C_L :=\sum_{i,j\geq 0} \alpha_{L,i} \mathfrak W_{i,j}\topp L\beta_{L,j}<\infty.
\]
Note that throughout for finite $L$ implicitly we assume $L$ is large enough so that $\rho_{L,0}\rho_{L,1}\in(0,1)$ and hence $\Pr_L$ is a well-defined probability measure.
In our previous work \cite[Theorem 1.1]{Bryc-Wang-2019} we proved  that if $\gamma$ is selected uniformly from $\calM_{0,0}\topp L$, then
\begin{multline*} \frac{1}{\sqrt{2L}}\ccbb{U_L(x)-\frac{\floor{L x}}{3}, H_L(x)-\frac{\floor{L x}}{3},D_L(x)-\frac{\floor{L x}}{3}}_{x\in[0,1]}\\
\fddto
\ccbb{\frac{1}{2\sqrt{3}}B_x^{ex}+\frac{1}{6}B_x, -\frac{1}{3}B_x,  \frac{1}{6}B_x-\frac{1}{2\sqrt{3}}B_x^{ex}}_{x\in[0,1]}
, \end{multline*}
where $(B_x)_{x\in[0,1]}$ is a Brownian %
motion, $(B_x^{ex})_{x\in[0,1]}$ is a Brownian excursion, and the   processes $(B_x)_{x\in[0,1]}$ and $(B_x^{ex})_{x\in[0,1]}$ are independent. Formally, this model corresponds to the choice of $\rho_{L,0}=0, \rho_{L,1}=0,\sigma=1$.

Now, with more general end-point  weights that vary with $L$,  the asymptotics of \eqref{U-D-L} relies on another Markov process instead of the Brownian  excursion. Let
\begin{equation}
  \label{gg}
  \g_t(x,y) := \frac1{\sqrt {2\pi t}}\bb{\exp\pp{-\frac 1{2t}(x-y)^2} - \exp\pp{-\frac 1{2t}(x+y)^2}}\ind_{x>0,y>0}, \quad t>0,
\end{equation}
denote the transition kernel of the Brownian motion killed at hitting zero. Consider the Markov process $\pp{\widetilde\eta\topp{\A,\C}}_{x\in[0,1]}$  with joint probability density function at  points $0=x_0<x_1<\cdots<x_d=1$ given by
\equh\label{eq:eta_pdf}
\wt p\topp{\A,\C}_{x_0,\dots,x_d}(y_0,\dots,y_d):= \frac{1}{\mathfrak C_{\A,\C}} e^{-(\C y_0+\A y_d)/{\sqrt 2}}\,\prod_{k-1}^d \g_{x_k-x_{k-1}}(y_{k-1},y_k),\quad y_0,\dots,y_d>0,
\eque
 with the normalizing constant
\begin{equation}
  \label{C(a,c)}
   \mathfrak C_{\A,\C}= \int_{\RR_+^2} e^{-(\C x+\A y)/{\sqrt 2}}\g_1(x,y)dx dy,
\end{equation}
 given by the explicit expression \eqref{eq:C_ac}.
Let  $\eta\topp{\A,\C}$ denote the increment process
\equh\label{eq:eta}
\eta\topp{\A,\C}_x:=\widetilde\eta\topp{\A,\C}_x-\widetilde\eta\topp{\A,\C}_0, x\in[0,1].
\eque

Recall that  for each $L$ fixed we let $(\gamma_0,\dots,\gamma_L)$ denote a sequence from $\calM\topp L$ sampled from $\Pr_L$ given in \eqref{Pr0}, including in particular the left-hand side of \eqref{path}, and the counting processes $U_L,H_L,D_L$ depend on $(\gamma_0,\dots,\gamma_L)$  as in \eqref{U-D-L}. Our main result is the following.
\begin{theorem}\label{thm:1}
Assume $\A,\C\in\R, \A+\C>0$ and $\sigma>0$. Set
\begin{equation}
  \label{a'c'} \A'=\frac{2\A}{\sqrt{2+\sigma}}, \quad \C'=\frac{2\C}{\sqrt{2+\sigma}}.
\end{equation}
Then the following convergence holds.
\begin{enumerate}[(i)]
\item  %
As $L\to\infty$, we have %
\begin{equation}
  \label{path}
  \sqrt{\frac{2+\sigma}{2 L}}\left(\gamma_{\floor{Lx}} \right)_{x\in[0,1]}\fddto
\pp{\wt \eta_x\topp {\A',\C'}}_{x\in[0,1]}.
\end{equation}
  \item   %
As $L\to\infty$, we have %
\begin{multline}
  \label{UHD}
\frac1{\sqrt {2L}}\,\ccbb{U_L(x)-\frac{1}{2+\sigma}\floor{Lx},\;  H_L(x)-\frac{\sigma }{2+\sigma}\floor{Lx},\;D_L(x)-\frac{1}{2+\sigma}\floor{Lx}}_{x\in[0,1]}\\
\fddto\ccbb{ \frac1{2\sqrt{2+\sigma}}\eta_x\topp {\A',\C'}+{\frac{\sqrt\sigma}{2(2+\sigma)}} B_x,\; - {\frac{\sqrt\sigma}{2+\sigma}} B_x, \; {\frac{\sqrt\sigma}{2(2+\sigma)}} B_x- \frac1{2\sqrt{2+\sigma}}\eta_x\topp {\A',\C'}}_{x\in[0,1]}
, \end{multline}
where $(B_x)_{x\in[0,1]}$ is a Brownian %
motion, $(\eta_x \topp {\A',\C'})_{x\in[0,1]}$ is given by \eqref{eq:eta}, and the   processes $(B_x)_{x\in[0,1]}$ and $(\eta\topp{\A',\C'}_x)_{x\in[0,1]}$ are independent.

\end{enumerate}
\end{theorem}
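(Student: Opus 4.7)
My approach is to reduce both parts to pointwise convergence of finite-dimensional probability mass functions and Laplace transforms, exploiting the Markovian factorization of $\Pr_L$. For $0 = x_0 < x_1 < \cdots < x_d = 1$ and $k_j = \sfloor{L x_j}$, the joint law factors as
\[
\Pr_L\pp{\gamma_{k_0} = n_0, \ldots, \gamma_{k_d} = n_d} = \frac{\alpha_{L,n_0}\beta_{L,n_d}}{\mathfrak{C}_L}\prodd{j}{1}{d} \mathfrak{W}\topp{k_j - k_{j-1}}_{n_{j-1}, n_j},
\]
where $\mathfrak{W}\topp{\ell}_{i,j}$ (which depends implicitly on $\sigma$ through the weight $w_\sigma$) is the sum of $\sigma^{\#\text{horizontal}}$ over Motzkin paths of length $\ell$ from $i$ to $j$. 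Each factor is then analyzed under the rescaling $n_j \sim y_j\sqrt{2L/(2+\sigma)}$.

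\textbf{Part (i).}
The key lemma is the asymptotic
\[
\mathfrak{W}\topp{\sfloor{Ls}}_{n,n'} = (2+\sigma)^{\sfloor{Ls}} \sqrt{\tfrac{2+\sigma}{2L}}\pp{\g_s(y,y') + o(1)},
\]
uniform in $(y,y')$ on compact subsets of $(0,\infty)^2$. This follows from the reflection identity
\[
\mathfrak{W}\topp{\ell}_{i,j} = [z^{j-i}](z+\sigma+z^{-1})^\ell - [z^{j+i+2}](z+\sigma+z^{-1})^\ell
\]
together with the local central limit theorem for the lattice walk whose step generating function is $(z+\sigma+z^{-1})/(2+\sigma)$, having mean $0$ and variance $2/(2+\sigma)$ (which matches the Brownian rescaling). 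Meanwhile \eqref{rho(L)}--\eqref{a'c'} give $\alpha_{L,n_0}\to e^{-\C' y_0/\sqrt 2}$ and $\beta_{L,n_d}\to e^{-\A' y_d/\sqrt 2}$, and a Riemann-sum argument on the double sum over $(n_0,n_d)$ yields $\mathfrak{C}_L \sim (2+\sigma)^L \sqrt{2L/(2+\sigma)}\,\mathfrak{C}_{\A',\C'}$. Taking the ratio of these asymptotics and multiplying by the Jacobian $(\sqrt{2L/(2+\sigma)})^{d+1}$ of the rescaling produces the target joint density $\wt p\topp{\A',\C'}_{x_0,\ldots,x_d}(y_0,\ldots,y_d)$; the pointwise convergence of densities, together with a dominated-convergence argument on compacts, yields \eqref{path}.

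\textbf{Part (ii).}
The identities $U_L(x) + D_L(x) + H_L(x) = \sfloor{Lx}$ and $U_L(x) - D_L(x) = \gamma_{\sfloor{Lx}} - \gamma_0$ hold pathwise, so \eqref{UHD} reduces to the joint convergence of $(\sqrt{(2+\sigma)/(2L)}\,\gamma_{\sfloor{Lx}},\;(H_L(x)-\sigma\sfloor{Lx}/(2+\sigma))/\sqrt{2L})_x$ to $(\wt\eta_x\topp{\A',\C'},\;-\tfrac{\sqrt\sigma}{2+\sigma} B_x)_x$ with $B$ independent of $\wt\eta\topp{\A',\C'}$. I would compute the relevant joint Laplace transform by a per-segment tilt of the horizontal weight: replacing $\sigma$ by $\sigma_j := \sigma e^{\mu_j/\sqrt{2L}}$ on segment $j$ turns the moment generating function of the $H_L$ increments into a ratio of a tilted partition function to $\mathfrak{C}_L$. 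The same LCLT as in Part (i) applies with $\sigma_j$ in place of $\sigma$, and a Taylor expansion of $(2+\sigma_j)^{\sfloor{Ls_j}}$, after subtracting the centering $\sigma \sfloor{Ls_j}/(2+\sigma)$, leaves a Gaussian contribution $\exp(\sigma \mu_j^2 s_j/(2(2+\sigma)^2))$ per segment --- precisely the Laplace transform of $\calN(0,\sigma s_j/(2+\sigma)^2)$. Since this factor is independent of $(y_j)$ at leading order, it factorizes cleanly from the $\wt p\topp{\A',\C'}$ density, yielding both the Gaussian scaling limit of $H_L$ and its independence from $\wt\eta\topp{\A',\C'}$.

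\textbf{Main obstacle.}
The delicate point is the independence in Part (ii). The tilt $\sigma \to \sigma_j$ shifts the latent walk's variance to $2/(2+\sigma_j)$, so the rescaling $y = n\sqrt{(2+\sigma)/(2L)}$ is no longer exactly matched across segments, generating cross-terms of order $\mu_j/\sqrt L$ in the $\g$-kernel asymptotics and in the tilted boundary constants. Showing that these are genuinely negligible uniformly in $(y_j,\mu_j)$ on compact sets requires expanding the LCLT and the normalizing constant to one order beyond leading and verifying that all off-diagonal couplings between $\mu_j$ and $y_j$ cancel; this careful second-order accounting is the technical core of the argument.
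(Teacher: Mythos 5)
Your proposal is correct in outline but follows a genuinely different route from the paper. The paper never touches the transfer-matrix entries $\mathfrak W^{(\ell)}_{i,j}$ directly: it encodes the generating function $\EE_L[z_0^{\gamma_0}\prod_k t_k^{\eps_k^-}u_k^{\eps_k^0}z_1^{\gamma_L}]$ via a matrix ansatz, converts it to an expectation over a Markov process with semicircle marginals using Chebyshev martingale polynomials, extracts the limit through the substitution $x=2-u^2/L$ (producing the tangent process with kernel $\p_t$), and only then converts back to the $\g$-kernel via a duality formula; independence in part (ii) falls out of the factorization of the limiting Laplace transform. You instead exploit the segment-wise Markov factorization of $\Pr_L$, the reflection identity $\mathfrak W^{(\ell)}_{i,j}=[z^{j-i}](z+\sigma+z^{-1})^{\ell}-[z^{j+i+2}](z+\sigma+z^{-1})^{\ell}$ (valid because reflection preserves the weight, the up/down steps being equiweighted), and a local CLT; the two Gaussian terms map directly onto the two terms of $\g_s$, so the kernel \eqref{gg} appears without any duality step. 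This is more elementary and more transparent; what it gives up is the closed-form control of $\mathfrak C_L$ that the paper gets from the Marchenko--Pastur representation \eqref{ZL-MP}. Your tilting argument for part (ii) is also sound, and in fact your ``main obstacle'' is milder than you fear: the cross-terms between $\mu_j$ and $y_j$ in the tilted heat kernel are individually $O(L^{-1/2})$ for $y$'s in a compact set, so they vanish rather than needing to cancel; only the second-order expansion of $(2+\sigma_j)^{\ell_j}$ against the centering must be done exactly, which you do.

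Two points genuinely need more than you provide. First, when $\min(\A,\C)<0$ (allowed since only $\A+\C>0$ is assumed), one boundary weight, say $\beta_{L,n}=(1-\A/\sqrt L)^{n}$, \emph{grows} in $n$, and the sums defining $\mathfrak C_L$ and the finite-dimensional Laplace transforms range over heights $n$ up to order $L$, far outside the $O(\sqrt L)$ window where your compact-set LCLT applies. You then need moderate/large-deviation upper bounds on $[z^m](z+\sigma+z^{-1})^{\ell}$ to show the contribution of $n\gg\sqrt L$ is negligible; this is exactly the regime the paper isolates as the atom $\left(1-\rho^{-2}\right)_+\delta_{\rho+1/\rho}$ in \eqref{M-P} and must be checked, not just asserted. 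Second, pointwise convergence of the rescaled mass functions to the density \eqref{eq:eta_pdf} does not by itself give f.d.d.\ convergence; you need uniform integrability (tightness plus tail bounds, again interacting with the growing boundary weight) to pass from the local limit to convergence of expectations of bounded continuous functionals, or equivalently a Scheff\'e-type argument after verifying that the rescaled masses sum to $1+o(1)$ over the bulk. Both issues are standard and fixable, so I regard them as technical gaps in an otherwise viable alternative proof rather than flaws in the strategy.
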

\begin{remark}\label{Rem.Don}
Note that as a corollary of Theorem \ref{thm:1}(ii), using $U_L(x)-D_L(x)=\gamma_{\floor{L x}}-\gamma_0$,  we have
 $$
     \frac{1}{\sqrt{2L}}\{\gamma_{\floor{L x}}-\gamma_0\}_{x\in[0,1]}\fddto \frac{1}{\sqrt{2+\sigma}}\eta\topp{\A',\C'} \quad \mbox{ as $L\to\infty$}.$$
This result, in fact, can be obtained directly by a soft argument and in a stronger convergence mode, as shown in the next proposition. We thank an anonymous referee for this observation.
 It is plausible that convergence in \eqref{path} and \eqref{UHD} can also be strengthened to convergence in %
  $D[0,1]$.
\end{remark}
\begin{proposition}\label{prop:1}
 Under the assumptions of Theorem \ref{thm:1},
with
\[
\xi_L(x):=\gamma_{\floor{L x}}-\gamma_0, x\in[0,1],
\]
we have
\begin{equation}
  \label{BD-Don}
\ccbb{  \frac1{\sqrt{L}}\xi_L(x)}_{x\in[0,1]}\Rightarrow \ccbb{\sqrt{\frac2{2+\sigma}} \eta_x\topp{\A',\C'}}_{x\in[0,1]},
\end{equation}
as $L\to\infty$
in Skorohod's space of c\`adl\`ag functions $D[0,1]$.
\end{proposition}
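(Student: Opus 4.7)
The strategy is the standard decomposition into convergence of finite-dimensional distributions and tightness in $D[0,1]$.

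The fdd convergence is immediate from Theorem \ref{thm:1}(i): applying the continuous linear projection $\omega\mapsto\omega-\omega(0)$ to each finite-dimensional marginal of the scaling limit there gives
$$
\bigl(\xi_L(x)/\sqrt{L}\bigr)_{x\in[0,1]}\fddto \sqrt{\frac{2}{2+\sigma}}\bigl(\wt\eta\topp{\A',\C'}_x-\wt\eta\topp{\A',\C'}_0\bigr)_{x\in[0,1]}=\sqrt{\frac{2}{2+\sigma}}\,\eta\topp{\A',\C'},
$$
since $\xi_L(x)=\gamma_{\lfloor Lx\rfloor}-\gamma_0$.

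For the tightness half, observe that $|\gamma_k-\gamma_{k-1}|\le 1$, so the jumps of $\xi_L/\sqrt L$ are uniformly $O(1/\sqrt L)$, and the candidate limit $\eta\topp{\A',\C'}$ has continuous paths. It therefore suffices to verify Aldous's tightness criterion, which, via the Markov property of $\gamma$ under $\Pr_L$, reduces to a second-moment bound of the form $\var_{\Pr_L}\!\bigl(\gamma_{\lfloor L(\tau+\delta')\rfloor}\mid\calF_{\lfloor L\tau\rfloor}\bigr)\lesssim L\delta'$ uniformly over stopping times $\tau$ in the natural filtration of $\gamma$. This is the kind of bound one expects because under $\Pr_L$ the chain $\gamma$ is a Doob-type conditioning of the random walk on $\ZZ$ with step weights $(1,\sigma,1)/(2+\sigma)$, whose step variance is $2/(2+\sigma)$; away from the wall $\{0\}$, the effective single-step variance under $\Pr_L$ agrees with that of the unconstrained walk up to an $O(1/\sqrt L)$ correction.

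The main obstacle is handling transitions of $\gamma$ near $0$, where the non-negativity conditioning and the endpoint weights $\rho_{L,0}^{\gamma_0}\rho_{L,1}^{\gamma_L}$ distort the walk. The mitigation is the observation, visible in the density $\wt p\topp{\A,\C}$ of \eqref{eq:eta_pdf} (supported on $\R_{>0}$), that with overwhelming $\Pr_L$-probability the path stays at height $\Theta(\sqrt L)$ for all but a $o(1)$ fraction of times. A union bound thus reduces the analysis to a macroscopic-height regime where the random-walk approximation is valid; stretches spent near the wall are controlled by tail estimates coming from the exponential-type boundary weights. Once this localization is in place, Aldous's criterion and hence the claim follow.
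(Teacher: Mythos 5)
Your route (fdd convergence from Theorem \ref{thm:1}(i) plus Aldous tightness) is genuinely different from the paper's, and the fdd half is fine. The paper instead gives a self-contained change-of-measure argument: writing $\vv\gamma^\circ_L=(\gamma_k-\gamma_0)_k$ and summing over the admissible values of $\gamma_0$, it shows that the law of $\xi_L$ is absolutely continuous with respect to the law of the partial-sum process $\zeta_L$ of a \emph{free} i.i.d.\ walk with steps in $\{\pm1,0\}$ and probabilities $(1,\sigma,1)/(2+\sigma)$, with explicit Radon--Nikodym density $\frac1{\mathsf C_L}\exp\bigl((\A+\C)\min_x\omega_x-\A\,\omega_1\bigr)$ evaluated at $\omega=\zeta_L/\sqrt L$; Donsker's theorem, uniform integrability of this density, and the known density \eqref{BD-21} of $\eta\topp{\A,\C}/\sqrt2$ with respect to Brownian motion then finish the proof. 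The non-negativity constraint and the boundary weights are absorbed entirely into the continuous functional $\omega\mapsto\min_x\omega_x$, so tightness is inherited for free from the unconstrained walk.

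The gap in your proposal is that the tightness half is asserted rather than proved, and the assertions are where the entire difficulty sits. The uniform conditional-variance bound $\var_{\Pr_L}(\gamma_{\floor{L(\tau+\delta')}}\mid\calF_{\floor{L\tau}})\lesssim L\delta'$ over all stopping times must in particular hold when $\gamma_{\floor{L\tau}}$ is at height $O(1)$, exactly where the wall conditioning and the $L$-dependent endpoint weight $\rho_{L,1}^{\gamma_L}$ (which exceeds $1$ when $\A<0$) distort the chain; your random-walk comparison does not apply there. Your proposed mitigation --- that the path spends all but an $o(1)$ fraction of its time at height $\Theta(\sqrt L)$ --- is inferred from the support of the \emph{limiting} density \eqref{eq:eta_pdf}, which is circular: properties of the limit cannot be used to establish the tightness needed to pass to that limit. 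Moreover, even granting the localization, Aldous's criterion requires control at arbitrary stopping times, including those that target the exceptional near-wall set, so the "union bound'' reduction does not close the argument. To make your approach rigorous you would essentially need a quantitative comparison of $\Pr_L$ with the free walk near the boundary --- which is precisely the Radon--Nikodym computation the paper performs globally and much more economically.
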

\begin{remark}
One can also work with random Motzkin paths with fixed left-end point zero and geometric weights for the right-end point, and obtain a corresponding joint convergence with a `randomized' Brownian meander (with joint probability density function proportional to $e^{-\A' y/\sqrt 2}\prod_{k-1}^d \g_{x_k-x_{k-1}}(y_{k-1},y_k)$ for $\A'>0$) in place of $B^{ex}$ and $\eta\topp{\A',\C'}$ above. Both Brownian excursion and randomized Brownian meanders showed up already in \citet{Bryc-Wang-Wesolowski-2022} in the study of limit fluctuations of height functions for open ASEP, denoted by $\eta\topp{\infty,\infty},\eta\topp{\A',\infty}$ therein. We omit the details for this case.
\end{remark}

\subsection{Motivation}
Process $(\eta\topp{\A,\C}_x)_{x\in[0,1]}$ from \eqref{eq:eta} has recently appeared in
investigations of non-equilibrium systems in the mathematical physics literature.

 First, it was shown in \citet{barraquand2022steady} and \citet{Bryc-Kuznetsov-2021} that $\eta\topp{\A,\C}$ can be obtained as a re-scaling of the processes
 that appeared %
  in the description
  of the {\em stationary measure of open KPZ} (on an interval), recently identified in \citet{corwin21stationary}, \citet{bryc23markov} and \citet{barraquand2022steady}. Namely, one can represent
 the stationary measure of the open KPZ equation  on  an interval $[0,\tau]$
  as
  \begin{equation}
    \label{CK-21}
    \left\{\frac{1}{\sqrt{2}}B_x\right\}_{x\in[0,\tau]}+\left\{Y\topp{\A,\C}_x - Y\topp{\A,\C}_0\right\}_{x\in[0,\tau]},
  \end{equation} where $B$ is a  Brownian motion, and processes $B$ and $Y$ are independent.
   As $\tau\to\infty$, we then have
\[%
\ccbb{\frac1{\sqrt{\tau}}Y\topp{\A/\sqrt\tau, \C/\sqrt\tau}_{x\tau}}_{x\in[0,1]} \fddto \ccbb{\frac1{\sqrt 2}\wt\eta_x\topp{\A,\C}}_{x\in[0,1]},
\]%
and hence
\equh\label{eq:second'}
\ccbb{\frac1{\sqrt{\tau}}\pp{Y\topp{\A/\sqrt\tau, \C/\sqrt\tau}_{x\tau} - Y_0\topp{\A/\sqrt\tau,\C/\sqrt\tau}}}_{x\in[0,1]} \fddto \ccbb{\frac1{\sqrt 2}\eta_x\topp{\A,\C}}_{x\in[0,1]}
\quad \mbox{ as $\tau\to\infty$.}
\eque
(The process denoted by $\wt \eta$ in \cite[Theorem 2.1]{Bryc-Kuznetsov-2021}
is $\frac1{\sqrt{2}}\widetilde\eta\topp{\A,\C}$
 here.)
The identification of the process $Y\topp{\A,\C}$ is a recent groundbreaking work. It is a Markov process with transitional law determined by a Doob's $h$-transform applied to the Yakubovich heat kernel; see \cite{bryc23markov} for details. The process \eqref{CK-21}
arises
in the scaling limit of height function of particle densities of open ASEP with five parameters
$\alpha_n,\beta_n,\gamma_n,\delta_n,q_n$
 all depending on the %
 size $n$
 of the system and appropriately chosen (known as the Liggett's condition).
It was conjectured by \citet{barraquand2022steady} that $\eta\topp{\A,\C}$ appears in the description of the stationary measure of open KPZ fixed point, a space-time Markov process that has not been rigorously defined yet in the literature.
Note that the limit theorem %
\eqref{eq:second'}
 leading to $\eta\topp{\A,\C}$ as summarized above can be understood as a double-limit theorem (first the convergence from height function of open ASEP to $\{Y_x\topp{\A,\C}-Y_0\topp{\A,\C}\}_{x\in[0,\tau]}$, and then the second convergence \eqref{eq:second'}).

Second, it was later shown by \citet{Bryc-Wang-Wesolowski-2022} that
with parameters $\alpha_n,\beta_n,\gamma_n,\delta_n$ appropriately chosen and $q\in[0,1)$ fixed,
the process %
$(\eta\topp{\A,\C}+ B)/\sqrt{2}$, where $B$ is an independent Brownian motion, %
arises directly as the scaling limit of height function of particle densities.
This convergence, in contrast to the first case,  can be understood as a single-limit theorem. %

The contribution of this paper is a third limit theorem for the process $\wt\eta\topp{\A,\C}$. We show that this process arises as the scaling limit of random Motzkin paths. Our model and analysis is considerably simpler than the open ASEP, and therefore the limit theorem provides a quick access to the process $\wt\eta\topp{\A,\C}$.
At the same time, we emphasize that we focus on the stationary measure
of conjectured open KPZ fixed point,
instead of the dynamics of the model (say starting from an arbitrary initial configuration).

The paper is organized as follows.
Section \ref{sec:representations} provides matrix and Markov representations for a larger class of random Motzkin paths including the one in Theorem \ref{thm:1}. Section \ref{sec:proof} provides the proof of Theorem \ref{thm:1}. Section \ref{sec:prop1} provides the proof of Proposition \ref{prop:1}.

\section{Matrix and Markov representations for random Motzkin paths}\label{sec:representations}
Our method is based on the fact that explicit integral representations of statistics of interest are available in closed form, and moreover they are convenient for asymptotic analysis.
We shall establish these representations for a larger class of random Motzkin paths than those considered in Theorem \ref{thm:1} (which corresponds to taking $\vv a= \vv c = (1,1,\dots)$ and $\vv b = (\sigma,\sigma,\dots)$ below).

Throughout this section, the length of the Motzkin paths $L$ is fixed.
  We first construct the weights of edges from three sequences
  \[
  \vv a=(a_j)_{j\geq 0},\quad \vv b=(b_j)_{j\geq 0},\quad \vv c=(c_j)_{j\geq 1},
  \]
  of real numbers, where we assume that  $a_0,a_1,\ldots>0$, $b_0,b_1,\ldots \geq 0$, and   $c_1,c_2,\ldots > 0$.
For each path
\[
\gamma=(\gamma_0=i,\gamma_1,\dots,\gamma_{L-1},\gamma_L=j)\in\calM_{i,j}\topp L,
\] we define its weight
\[%
  w(\gamma)\equiv w_{\vv a,\vv b, \vv c,L}(\gamma)=\prod_{k=1}^L a_{\gamma_{k-1}}^{\eps_k^+} b_{\gamma_{k-1}}^{\eps_k^0}c_{\gamma_{k-1}}^{\eps_k^-}, \quad \gamma\in\calM_{i,j}\topp L, L\in\N.
\]%
That is, we take $\vv a$, $\vv b$ and $\vv c$ as the weights of the up steps, horizontal steps and down steps, and the weight of a step depends also on the altitude of the left-end of an edge.
Since $\calM_{i,j}\topp L$ is a finite set, the normalization constants
\[
  \mathfrak W_{i,j}\topp L=\sum_{\gamma\in\calM_{i,j}\topp L} w(\gamma)
\]
are well defined for all $i,j\geq 0$.

In addition to the weights of the edges, we wish to also weight the end-points, i.e. the initial and the final altitudes of a Motzkin path.
To this end we choose two additional non-negative sequences $\vv\alpha=(\alpha_i)_{i\ge 0}$ and $\vv\beta=(\beta_i)_{i\ge 0}$ such that

\begin{equation}
   \label{CL}
   \mathfrak C_L \equiv \mathfrak{C}_{\vv\alpha,\vv\beta,\vv a, \vv b, \vv c, L}:=\sum_{i,j\geq 0} \alpha_i \mathfrak W_{i,j}\topp L\beta_j<\infty.
\end{equation}
Most of the time, for the sake of simplicity we drop the dependence on the boundary-weight parameters $\vvalpha,\vvbeta$ and edge-weight parameters  $\vv a,\vv b, \vv c$, but keep the dependence on the length $L$.

Note that   $\mathfrak W_{i,j}\topp L=0$ for $|j-i|>L$. So if the sequences $\vv a,\vv b, \vv c$ are bounded, then $\mathfrak W_{i,j}\topp L$ are also bounded, and \eqref{CL} is finite if $\sum_{n\ge 0} \alpha_n\beta_{n+j}<\infty$ for $-L\le j\le L$.
With finite normalizing constant \eqref{CL}, the countable set  $\calM\topp L=\bigcup_{i,j\geq 0} \calM_{i,j}\topp L$ becomes a probability space with the discrete probability measure
\[%
\Pr_L(\gamma)\equiv  \Pr_{\vv\alpha,\vv\beta,\vv a, \vv b, \vv c, L}(\gamma)=\frac{\alpha_{\gamma_0}\beta_{\gamma_L}}{\mathfrak{C}_{L}} w(\gamma), \mfa \gamma\in \calM\topp L.
\]%
By a random Motzkin path of length $L$, we refer to the random element in $\calM\topp L$ with law $\Pr_L$.

Such a construction seems to be a folklore.  The case $\alpha=(1,0,0\dots),\beta=(1,0,0,\dots)$ and $\vv a=\vv b=(1,1,\dots)$, $\vv c=(1,1,\dots)$   recovers the uniform choice of Motzkin paths from $\calM_{0,0}\topp L$ that  we considered in \cite[Theorem 1.1]{Bryc-Wang-2019}.
Of our special interest is the example with  bounded $\vv a,\vv b,\vv c$ and geometric weights
\equh\label{eq:geometric}
\alpha_n=\rho_0^n, \beta_n=\rho_1^n, n\ge 0, \quad\mbox{ for some }\quad \rho_0,\rho_1>0, \rho_0\rho_1<1.
\eque
 In this case, the normalizing constant \eqref{CL} is finite when the product $\rho_0\rho_1<1$.

For non-uniform laws and geometric boundary weights, we mention
an example
that motivated our framework here.
\begin{example}
 \citet[Section V.4]{flajolet09analytic} and
 and \cite{Viennot-1984a}  consider the case of equal weights $\vv a=(1,1,\dots)$ for the up-steps, with varying weights of horizontal and down steps. The choice
\[
\alpha_n=\pp{\frac{1-\alpha}{\alpha}}^{n+1},\; \beta_n=\pp{\frac{1-\beta}{\beta}}^{n+1},\; \vv a=\vv c=(1,1,\dots),\; \vv b=(2,2,\dots),
\]
with $\alpha,\beta\in(0,1)$ such that $\alpha+\beta>1$ recovers Motzkin paths that appear in the analysis of open TASEP in \cite[Section 2.2]{derrida04asymmetric} (after shifting their paths  down by one unit).
\end{example}

  \begin{figure}[H]
  \begin{tikzpicture}[scale=1]
 \draw[->] (0,0) to (0,3);

 \draw[->] (0,0) to (11,0);
\draw[-,thick] (0,0) to (1,0);
\draw [fill] (0,0) circle [radius=0.05];
 \node[above] at (.6,0) {\footnotesize  $b_0$};
\draw[-,thick] (1,0) to (2,1);
\draw [fill] (1,0) circle [radius=0.05];
 \node[above] at (1.4,0.6) {\footnotesize  $a_0$};
\draw[-,thick] (2,1) to (3,0);
\draw [fill] (2,1) circle [radius=0.05];
 \node[above] at (2.6,0.6) {\footnotesize  $c_1$};
\draw[-,thick] (3,0) to (4,1);
\draw [fill] (3,0) circle [radius=0.05];
 \node[above] at (3.4,0.6) {\footnotesize  $a_0$};
\draw[-,thick] (4,1) to (5,1);
\draw [fill] (4,1) circle [radius=0.05];
 \node[above] at (4.5,1) {\footnotesize  $b_1$};
\draw[-,thick] (5,1) to (6,2);
\draw [fill] (5,1) circle [radius=0.05];
 \node[above] at (5.5,1.6) {\footnotesize  $a_1$};
\draw[-,thick] (6,2) to (7,1);
\draw [fill] (6,2) circle [radius=0.05];
 \node[above] at (6.5,1.6) {\footnotesize  $c_2$};
\draw[-,thick] (7,1) to (8,0);
\draw [fill] (7,1) circle [radius=0.05];
 \node[above] at (7.5,0.6) {\footnotesize  $c_1$};
\draw[-,thick] (8,0) to (9,1);
\draw [fill] (8,0) circle [radius=0.05];
 \node[above] at (8.4,0.6) {\footnotesize  $a_0$};
 \draw [fill] (9,1) circle [radius=0.05];

   \node[below] at (1,0) {  $1$};
      \node[below] at (2,0) {  $2$};
       \node[below] at (3,0) {  $3$};
        \node[below] at (4,0) {  $4$};
         \node[below] at (5,0) {  $5$};
          \node[below] at (6,0) {  $6$};
           \node[below] at (7,0) {  $7$};
            \node[below] at (8,0) {  $8$};
             \node[below] at (9,0) {  $9$};

\end{tikzpicture}
\caption{Motzkin path $\gamma=(0,0,1,0,1,1,2,1,0,1)\in\calM\topp 9$     with weight contributions marked at the edges.  The probability of selecting this path from $\calM\topp 9$ is $\Pr(\gamma)=\frac{\alpha_0 \beta_1}{\mathfrak C_9} b_0 b_1 a_0^3 a_1 c_1^2  c_2$. The total number of horizontal steps is $H_{9}(1)=2$ and the total number of up steps is $U_{9}(1)=4$.}
\label{Fig2.1}
\end{figure}
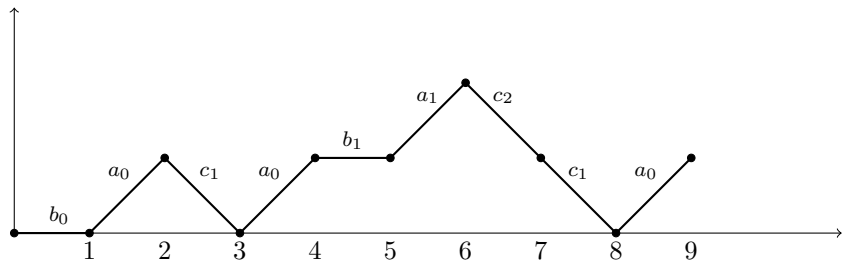

Recall that the general framework of the random Motzkin paths depending on the edge-weight parameters $\vv a,\vv b,\vv c$ and boundary-weight parameters $\vv\alpha,\vv\beta$. For such a random Motzkin path with length $L$, we let
\[
\Pr_L\equiv \Pr_{\vvalpha,\vvbeta, \vv a,\vv b, \vv c, L}
\] denote its law (a probability measure on $\calM\topp L$), and $\EE_L$ the expectation with respect to $\Pr_L$.

\subsection{Matrix representation}
We first start with a matrix representation, known as the matrix ansatz in the literature. Introduce
$$\bfA=
\begin{bmatrix}
  0& a_0 & 0 &0 &\dots \\
  0& 0 &a_1 & 0& \\
  0&0&0&a_2 & \\
  \vdots& & & &\ddots
\end{bmatrix},
\;
\bfB=
\begin{bmatrix}
  b_0& 0 & 0 &0 &\dots \\
  0& b_1 &0 & 0& \\
  0&0&b_2&0 & \\
    \vdots& & & \ddots
\end{bmatrix},
\;
\bfC=
\begin{bmatrix}
  0& 0 & 0 &0 &\dots \\
  c_1& 0 &0 & 0& \\
  0&c_2&0&0 & \\
    \vdots& & \ddots
\end{bmatrix}.
$$
Furthermore, introduce two vectors
$$
\langle W_{\vvalpha}(z) | = \begin{bmatrix}
 \alpha_0&\alpha_1z& \alpha_2 z^2&\dots
\end{bmatrix}, \quad | V_{\vvbeta}(z) \rangle =
\begin{bmatrix}\beta_0\\\beta_1z\\ \beta_2 z^2\\\vdots\end{bmatrix},
$$
which are viewed as functions of $z$.
Recall the ``decomposition" of a Motzkin path defined in \eqref{epsilons}.
Throughout, for product of matrices $M_1,\dots,M_L$, we take the convention $\prod_{k=1}^L M_k = M_1M_2\cdots M_L$.

\begin{lemma}

Under assumption \eqref{CL},  given $s_k,t_k,u_k>0$ and $z_0,z_1\in(0,1]$, we have
\begin{align}
\sum_{\gamma\in\calM\topp L}z_0^{\gamma_0}\prod_{k=1}^L \pp{s_k^{\eps_k^+}t_k^{\eps_k^-}u_k^{\eps_k^0} }z_1^{\gamma_L}\alpha_{\gamma_0}w(\gamma)\beta_{\gamma_L} & = \left\langle W_\vvalpha(z_0) \middle|\prod_{k=1}^L (s_k \bfA+t_k\bfC+u_k\bfB)
     \middle| V_\vvbeta(z_1) \right\rangle,\nonumber\\
\mathfrak C_L = \sum_{\gamma\in\calM\topp L}\alpha_{\gamma_0}w(\gamma)\beta_{\gamma_L}& = \langle W_\vvalpha(1) |(\bfA+\bfC+\bfB)^L
     | V_\vvbeta(1) \rangle. \label{eq:CL2}
     \end{align}
In particular,
\begin{equation}\label{matrix-ansatz}
     \EE_L\left[ z_0^{\gamma_0}\prod_{k=1}^L s_k^{\eps_k^+}t_k^{\eps_k^-}u_k^{\eps_k^0} z_1^{\gamma_L}\right]
     =
     \frac{1}{\mathfrak C_L}\left\langle W_\vvalpha(z_0) \middle|\prod_{k=1}^L (s_k \bfA+t_k\bfC+u_k\bfB)
     \middle| V_\vvbeta(z_1) \right\rangle.
\end{equation}
\end{lemma}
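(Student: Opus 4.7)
The plan is to prove the identity \eqref{matrix-ansatz} (and consequently \eqref{eq:CL2}) by expanding the matrix product entry-wise and matching the result term-by-term with the sum over Motzkin paths. First I would observe that the matrices act as elementary step operators on basis vectors $|e_i\rangle$ indexed by $\ZZ_{\ge 0}$: $\bfA |e_i\rangle = a_i |e_{i+1}\rangle$, $\bfC|e_i\rangle = c_i |e_{i-1}\rangle$ (with the understanding $\bfC|e_0\rangle = 0$, which automatically encodes the non-negativity constraint), and $\bfB|e_i\rangle = b_i |e_i\rangle$. Hence $(s_k \bfA + t_k\bfC + u_k\bfB)_{i,j}$ equals $s_k a_i$ if $j=i+1$, $t_k c_i$ if $j=i-1$, $u_k b_i$ if $j=i$, and $0$ otherwise.

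Next I would expand the product of sums by multilinearity: for each position $k\in\{1,\dots,L\}$, exactly one of the three matrix summands is chosen. Computing the $(i,j)$-entry of the resulting product then amounts to summing
\[
\sum_{i_0=i,\,i_1,\dots,i_{L-1},\,i_L=j}\prod_{k=1}^L(s_k\bfA+t_k\bfC+u_k\bfB)_{i_{k-1},i_k}
\]
over intermediate indices $i_1,\dots,i_{L-1}\in\ZZ_{\ge 0}$ with $|i_k-i_{k-1}|\le 1$, i.e.\ exactly over Motzkin paths $\gamma=(\gamma_0=i,\gamma_1,\dots,\gamma_L=j)\in\calM_{i,j}\topp L$. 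For such a $\gamma$, the product along the path factors as
\[
\prod_{k=1}^L s_k^{\eps_k^+}t_k^{\eps_k^-}u_k^{\eps_k^0}\cdot\prod_{k=1}^L a_{\gamma_{k-1}}^{\eps_k^+}c_{\gamma_{k-1}}^{\eps_k^-}b_{\gamma_{k-1}}^{\eps_k^0}=\prod_{k=1}^L s_k^{\eps_k^+}t_k^{\eps_k^-}u_k^{\eps_k^0}\cdot w(\gamma),
\]
matching the integrand in the claimed identity.

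Finally, I would contract with the boundary vectors. Since $\langle W_\vvalpha(z_0)|$ has $i$-th coordinate $\alpha_i z_0^i$ and $|V_\vvbeta(z_1)\rangle$ has $j$-th coordinate $\beta_j z_1^j$, multiplying and summing over $i,j\ge 0$ produces precisely $\sum_\gamma z_0^{\gamma_0}\alpha_{\gamma_0}\,\bigl(\prod_k s_k^{\eps_k^+}t_k^{\eps_k^-}u_k^{\eps_k^0}\bigr)\,w(\gamma)\,\beta_{\gamma_L}z_1^{\gamma_L}$. The special case $s_k=t_k=u_k=1$ and $z_0=z_1=1$ gives \eqref{eq:CL2}, and dividing the general identity by $\mathfrak C_L$ yields \eqref{matrix-ansatz}.

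The only analytic subtlety—and the main (modest) obstacle—is to justify manipulations with the infinite-dimensional matrices and the double sum over $(i,j)$. This is easy: each entry of the finite product $\prod_{k=1}^L(s_k\bfA+t_k\bfC+u_k\bfB)$ involves only finitely many non-zero intermediate terms, because a single matrix shifts the index by at most $1$, so entry $(i,j)$ vanishes unless $|j-i|\le L$ and the number of length-$L$ integer sequences from $i$ to $j$ with unit increments is finite. The outer sum over $i,j\ge 0$ converges absolutely: all terms are non-negative, and for $z_0=z_1=1$, $s_k=t_k=u_k=1$ it is exactly $\mathfrak C_L<\infty$ by assumption \eqref{CL}; for $z_0,z_1\in(0,1]$ and arbitrary positive $s_k,t_k,u_k$ the sum is dominated by a finite multiple of $\mathfrak C_L$, so Fubini/rearrangement is legal throughout.
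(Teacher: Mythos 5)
Your proposal is correct and takes essentially the same route as the paper: both identify the $(i,j)$ entry of $\prod_{k=1}^L(s_k\bfA+t_k\bfC+u_k\bfB)$ with the weighted sum over paths in $\calM_{i,j}\topp L$ (the paper phrases this as the ``total weight of $L$-step paths'' interpretation of $\vec\pi^T P^L$ for a weight matrix, which is just your entry-wise expansion in words), and then contract with the boundary vectors; your explicit treatment of absolute convergence is a welcome addition that the paper leaves implicit. One harmless notational slip: with the paper's conventions $\bfA$ acting on a standard basis column vector lowers the index ($\bfA$ applied to the $i$-th basis vector gives $a_{i-1}$ times the $(i-1)$-th), so your displayed operator formulas use the transposed (left/row) action; since the matrix-entry formula you state immediately afterwards is correct and is all that the argument uses, nothing is affected.
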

\begin{proof}
We first notice that, by definition,
\equh\label{eq:EL}
     \EE_L\left[ z_0^{\gamma_0}\prod_{k=1}^L s_k^{\eps_k^+}t_k^{\eps_k^-}u_k^{\eps_k^0} z_1^{\gamma_L}\right]
 = \frac{\sum_{i,j\geq 0} \alpha_iz_0^i\beta_jz_1^j\sum_{\gamma\in\calM_{i,j}\topp L} \prod_{k=1}^L (s_k^{\eps_k^+}t_k^{\eps_k^-}u_k^{\eps_k^0})  w(\gamma)}{\sum_{i,j\geq 0} \alpha_i\beta_j\sum_{\gamma\in\calM_{i,j}\topp L} w(\gamma)},
\eque
and the denominator on the right-hand side is nothing but $\mathfrak C_{L}$ in \eqref{CL}. Recall that $\varepsilon$'s are functions of $\gamma$.

We start by proving the formula for $\mathfrak C_L$. First recall the following well-known fact. Consider a finite (say $n$) state Markov chain. Let $P = (P_{i,j})_{i,j=1,\dots,n}$ be its transitional probability matrix, so that $P_{i,j}$ is the probability
of transitioning
from state $i$ to $j$ in one step. Let $\vec\pi$, a vertical vector in $\R^n$, represent a marginal law of the Markov chain. Then, $\vec\pi^T P^k$ represents the marginal law of the Markov chain starting from the law represented by $\vec\pi$ in $k$ steps. This representation can be extended to Markov chain with countably infinite states, and also to inhomogeneous ones.

Moreover, this presentation can be further extended to the situation where $P$ is replaced by a weight matrix (each entry is non-negative but the sum of each row is not necessarily one), and also that the sum of entries in $\vec\pi$ is not necessarily one. In this case, $\vec\pi$ and $\vec\pi^T P^k$ are no longer interpreted as probability laws. However, by the proof behind the interpretation of $\vec\pi^T P^k$ above,  it is readily checked that $(\vec\pi^T P^k)_i$ is the total weights of all paths (understood in the obvious way) ending at location $i$ in $k$ steps, with unit weight assigned to the location $i$.

The above discussion provides an interpretation of $\langle W_\vvalpha(1) |(\bfA+\bfC+\bfB)^L
     |$, with $P = \bfA+\bfC+\bfB$ and $\vec\pi^T = \langle W_\vvalpha(1)|$, as the total weights of $L$ step paths with initial weight $\vec\pi$ and weight matrix $P$, and {\em uniform weight on the end points}.   Now, the right-hand side of \eqref{eq:CL2}, with the extra factor $|V_\vvbeta(1)\rangle$ on the right, can be interpreted similarly, with weights $\vv\beta$ assigned additionally to the end locations. Therefore \eqref{eq:CL2} follows.

     For the numerator on the right-hand side of \eqref{eq:EL}, notice that one can write
     \[
     \sum_{\gamma\in\calM_{i,j}\topp L} \prod_{k=1}^L s_k^{\eps_k^+}t_k^{\eps_k^-}u_k^{\eps_k^0}  w(\gamma) =
          \sum_{\gamma\in\calM_{i,j}\topp L}  \wt w(\gamma) \qmwith \wt w(\gamma) = \prodd k1L (s_ka_{\gamma_{k-1}})^{\varepsilon_k^+}(u_kb_{\gamma_{k-1}})^{\varepsilon_k^0}(t_k c_{\gamma_{k-1}})^{\varepsilon_k^-}.
     \]
     So, again by the same interpretation before but now for inhomogeneous weight matrices $(s_k\bfA+t_k\bfC+u_k\bfB)_{k=1,\dots,L}$, we see
     \[
     \left\langle W_\vvalpha(z_0) \middle|\prod_{k=1}^L (s_k \bfA+t_k\bfC+u_k\bfB)
     \middle| V_\vvbeta(z_1) \right\rangle = \sum_{i,j\geq 0} \alpha_iz_0^i\beta_jz_1^j\sum_{\gamma\in\calM_{i,j}\topp L}\wt w(\gamma).
     \]
     This completes the proof.\end{proof}
The left-hand side of \eqref{matrix-ansatz} can be related to the joint Laplace transform of finite-dimensional distributions of the random Motzkin paths. However, the matrix representation on the right-hand side is not always convenient for asymptotic analysis.
\subsection{Markov representation}
The next step is to re-express the matrix representation in terms of integrals (expectations) involving  certain Markov process.
Also in this step, we eliminate one of the 3 variables by the relation  $\eps_k^++\eps_k^-+\eps_k^0=1$.
That is, we shall be interested here in \eqref{matrix-ansatz} with $s_k=1$.

First, for $t>0$ consider a family of orthogonal polynomials $\{p_n(x;t)\}_{n\ge 0}$ with Jacobi matrix $\bfA+t \bfC$. That is,
with%
 $$\vec{\vv p}(x;t)=\begin{bmatrix}
     p_0(x;t) \\ p_1(x;t)\\p_2(x;t) \\ \vdots
   \end{bmatrix},
   $$
the orthogonal polynomials are determined by
\[%
x\vec{\vv p}(x;t) = (\bfA+t\bfC)\vec{\vv p}(x;t), t>0,
\]%
or equivalently,
\begin{equation}\label{p-rec}
  xp_n(x;t) = a_n p_{n-1}(x;t) + t c_n p_{n+1}(x;t), n\ge 0,
\end{equation}
with $p_0(x;t) = 1, p_{-1}(x;t) = 0$.
For each $t>0$ let $\nu_t$ denote the associated orthogonal measure.
\begin{assumption}\label{assumption:1}
Consider $\vec{\vv p}$ and $\{\nu_t\}_{t\ge 0}$ as above for $\bfA$ and $\bfC$ given. We assume that there exists a Markov process $(X_t)_{t>0}$ such that the law of $X_t$ is $\nu_t$ and furthermore that for each $n\ge 0$, the stochastic process $\{p_n(X_t;t)\}_{t>0}$ is a martingale polynomial in the sense that
\[
\EE(p_n(X_t;t)|X_s)=p_{n}(X_s,s) \mfa 0\le s\le t.
\]
\end{assumption}
Some general conditions  in terms of matrices $\bfA$ and $\bf C$  for the existence of such Markov process could be read out from \cite{Bryc-Matysiak-Wesolowski-04}; there are also many classical as well as less-classical examples, see e.g.
\citet{Bryc-Wesolowski-03,Bryc-Wesolowski-08}.

It is easy to check that with
\[
p_n(x):=p_n(x;1),
\]
the solution of the three step recursion \eqref{p-rec} is
\begin{equation}
  \label{pt}
  p_n(x;t)= t^{n/2}p_n(x/\sqrt{t}).
\end{equation}
so measure $\nu_t$ is just a dilation of measure $\nu\equiv \nu_1$, in the sense that
$\nu_t(\cdot)=\nu(\sqrt{t}\cdot)$.
It is also well known (\cite[(1.23)]{Askey-Wilson-1985}) that
that
\[
\|p_n(\cdot;t)\|_{L^2(\nu_t)}^2 :=\int_\R p_n^2(x;t) \nu_t(dx)= \prod_{k=1}^n \frac{tc_k}{a_{k-1}}=t^n \|p_n\|_{L^2(\nu)}^2.
\]

We next introduce two generating functions
\[%
\phi_\vvalpha(x,z)= \sum_{n=0}^\infty \alpha_n z^n p_n(x)\qmand
\psi_\vvbeta(x,z)=\sum_{n=0}^\infty \beta_n z^n \frac{p_n(x)}{\|p_n\|_{L^2(\nu)}^2}.
\]%
We assume that  both series converge absolutely at $z=1$ on the support of probability measure $\nu$, and that the product of sums of the absolute values are integrable with respect to the measure $|x|^L \nu(dx)$   so that Fubini's theorem can be used in the proof below.
That is, we need to assume a stronger property that
\begin{equation}
  \label{A-2} %
 \int_\RR \sum_{m,n=0}^\infty \alpha_n  \frac{\beta_m} {\|p_m\|_2^2} \abs{x^Lp_n(x)p_m(x)}\nu(dx)<\infty.
\end{equation}

In view of \eqref{pt}, we have
\[%
\sum_{n=0}^\infty \alpha_n z^n p_n(x;t)=\phi_\vvalpha\pp{\frac x{\sqrt{t}},\frac z{\sqrt{t}}} \qmand
 \sum_{n=0}^\infty \beta_n z^n \frac{p_n(x;t)}{\|p_n(\cdot;t)\|_{L^2(\nu_t)}^2}=\psi_\vvbeta\pp{\frac x{\sqrt{t}},\frac z{\sqrt{t}}}.
\]%

 \begin{proposition}\label{prop:markov}
 Consider fixed parameters
 \[
  t_1\geq t_2\geq \dots\geq t_L>0 \qmand |z_0|^2t_1,|z_1|^2/t_L<1.
  \]
  We have
\begin{equation}\label{Mkv-ans0}
 \sum_{\gamma\in\calM\topp L} \left[z_0^{\gamma_0}z_1^{\gamma_L}\prod_{k=1}^L t_k^{\eps_k^-}u_k^{\eps_k^0}
    \right]={\EE\left[ \phi_\vvalpha\pp{\frac{X_{t_1}}{\sqrt{t_1}},z_0\sqrt{t_1 }}\psi_\vvbeta\pp{\frac{X_{t_L}}{\sqrt{t_L}},\frac{z_1}{\sqrt{t_L}}} \prod_{k=1}^L(\sigma u_k+X_{t_k}) \right]}.
 \end{equation}
 In particular,  %
   \begin{equation}\label{markov-ansatz}
     \EE_L \left[z_0^{\gamma_0}z_1^{\gamma_L}\prod_{k=1}^L t_k^{\eps_k^-}u_k^{\eps_k^0}
    \right] =
     \frac{\EE\left[ \phi_\vvalpha(X_{t_1}/\sqrt{t_1},z_0\sqrt{t_1})\psi_\vvbeta(X_{t_L}/\sqrt{t_L},z_1/\sqrt{t_L}) \prod_{k=1}^L(\sigma u_k+X_{t_k}) \right]}{\EE\left[ \phi_\vvalpha(X_{1},1)  \psi_\vvbeta(X_{1},1)(\sigma+X_{1})^L \right]}.
\end{equation}
\end{proposition}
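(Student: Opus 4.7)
The plan is to combine the matrix representation of Lemma~2.1 with the spectral decomposition of $\bfA+t\bfC$ furnished by the orthogonal polynomials $\{p_n(\cdot;t)\}$, and then glue the spectral decompositions at the different times $t_1\ge\cdots\ge t_L$ into a single expectation over the Markov process $(X_t)$ via the martingale polynomial identity of Assumption~\ref{assumption:1}.

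First I specialize \eqref{matrix-ansatz} to $s_k=1$ and---under the tacit assumption that the horizontal weights are constant, $\vv b=(\sigma,\sigma,\dots)$, so that $\bfB=\sigma\bfI$---write each factor as $J_{t_k}+\sigma u_k\bfI$ with $J_t:=\bfA+t\bfC$. The three-term recurrence \eqref{p-rec} together with orthogonality of $\{p_n(\cdot;t)\}$ under $\nu_t$ yields the matrix-element formula
\[
(J_t+\sigma u\bfI)_{n,m}=\frac{1}{\|p_m(\cdot;t)\|^2_{L^2(\nu_t)}}\int_\RR(x+\sigma u)\,p_n(x;t)p_m(x;t)\,\nu_t(dx).
\]
Iterating across the $L$ factors rewrites the $(n,m)$-entry of $\prod_k(J_{t_k}+\sigma u_k\bfI)$ as an $L$-fold integral over $(x_1,\dots,x_L)$: the integrand carries $\prod_k(x_k+\sigma u_k)$, boundary factors $p_n(x_1;t_1)$ on the left and $p_m(x_L;t_L)/\|p_m(\cdot;t_L)\|^2_{L^2(\nu_{t_L})}$ on the right, and between consecutive levels the ``glue'' kernel
\[
K_{s,t}(y,x):=\sum_{j\ge0}\frac{p_j(x;t)p_j(y;s)}{\|p_j(\cdot;t)\|^2_{L^2(\nu_t)}},\qquad s\le t,
\]
evaluated at $(s,t,y,x)=(t_{k+1},t_k,x_{k+1},x_k)$.

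The crucial input is that $K_{s,t}(y,\cdot)$ is precisely the $\nu_t$-density of the conditional law of $X_t$ given $X_s=y$, which follows from Assumption~\ref{assumption:1} upon expanding the transition density in the orthonormalized basis and identifying the coefficients via the martingale identity $\EE[p_j(X_t;t)\mid X_s=y]=p_j(y;s)$. Consequently the $L$-fold integral equals the iterated expectation
\[
\EE\!\left[p_n(X_{t_1};t_1)\,\frac{p_m(X_{t_L};t_L)}{\|p_m(\cdot;t_L)\|^2_{L^2(\nu_{t_L})}}\prod_{k=1}^L(X_{t_k}+\sigma u_k)\right].
\]
Summing against $\alpha_nz_0^n$ in $n$ and $\beta_mz_1^m$ in $m$, and using the dilation \eqref{pt} to match the two boundary series with $\phi_\vvalpha(X_{t_1}/\sqrt{t_1},z_0\sqrt{t_1})$ and $\psi_\vvbeta(X_{t_L}/\sqrt{t_L},z_1/\sqrt{t_L})$, gives \eqref{Mkv-ans0}. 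The identity \eqref{markov-ansatz} then follows by normalizing, since $\mathfrak C_L$ is the right-hand side of \eqref{Mkv-ans0} evaluated at $z_0=z_1=1$, $u_k=t_k=1$.

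The main technical obstacle I anticipate is justifying the interchange of the infinite sums (over $n,m$ and over the internal index $j$ in each glue kernel) with the $L$-fold integration. The hypotheses $|z_0|^2t_1<1$ and $|z_1|^2/t_L<1$ are there precisely to make the two boundary series absolutely convergent after the scaling by $\sqrt{t_1}$ and $1/\sqrt{t_L}$, and an integrability assumption of the type \eqref{A-2}, suitably adapted to the present parameters $z_0,z_1,t_k,u_k$, then permits Fubini to be applied throughout the chain of interchanges.
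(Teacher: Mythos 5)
Your overall strategy is sound and lands on the correct formula, but the mechanism you use for the key step is genuinely different from the paper's, and in one place it quietly assumes more than Assumption \ref{assumption:1} provides. The paper never expands transition kernels: it represents $|V_\vvbeta(z_1)\rangle$ as $\EE[\psi_\vvbeta(X_{t_L}/\sqrt{t_L},z_1/\sqrt{t_L})\,\vec{\vv p}(X_{t_L};t_L)]$, applies the factors $\bfA+t_k\bfC+u_k\bfB$ one at a time using the eigenvector relation $(\bfA+u\bfB+t\bfC)\vec{\vv p}(x;t)=(x+\sigma u)\vec{\vv p}(x;t)$, and after each application invokes the martingale property only in the weak form $\EE[Y\,p_n(X_{t_k};t_k)]=\EE[Y\,p_n(X_{t_{k-1}};t_{k-1})]$ for $Y$ measurable with respect to the past; this shifts the time index from $t_k$ up to $t_{k-1}$, and the recursion closes after pairing with $\langle W_\vvalpha(z_0)|$. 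Your route instead writes each matrix element of $J_{t_k}+\sigma u_k\bfI$ as an integral against $\nu_{t_k}$ and glues consecutive levels with the kernel $K_{s,t}(y,x)=\sum_j p_j(x;t)p_j(y;s)/\|p_j(\cdot;t)\|^2_{L^2(\nu_t)}$.

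The gap: the identification of $K_{s,t}(y,\cdot)$ with the conditional density of $X_t$ given $X_s=y$ does not follow from Assumption \ref{assumption:1} alone. That assumption supplies a Markov process with the right marginals and the martingale identity, but it does not guarantee that the transition kernel is absolutely continuous with respect to $\nu_t$, nor that its density lies in $L^2(\nu_t)$, nor that the resulting Mercer-type expansion converges in a sense strong enough to be substituted inside your $L$-fold integral. Your computation of the coefficients $c_j(y)=\EE[p_j(X_t;t)\mid X_s=y]=p_j(y;s)$ is correct \emph{given} such a density, but its existence is an extra hypothesis. For the concrete case the paper actually needs (Chebyshev polynomials, with explicit transition densities) your argument goes through verbatim, so this is a gap in generality rather than a fatal error; to prove Proposition \ref{prop:markov} as stated you should either add the density hypothesis or reorganize the gluing so that each internal integration over $x_k$ is performed by orthogonality plus the martingale identity directly --- which is exactly what the paper's operator recursion accomplishes. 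Your remarks about the tacit assumption $\bfB=\sigma\bfI$ (also implicit in the paper's statement) and about an integrability condition of type \eqref{A-2} justifying Fubini at the boundary sums are on target, as is the normalization step yielding \eqref{markov-ansatz} from \eqref{Mkv-ans0}.
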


The proof is based on the ideas in  \cite{Bryc-Wesolowski-08}, but there are also
significant differences. In particular we do not rely on $q$-commutation equations or quadratic harnesses.

 \begin{proof}
 Denote
  $$\vec{\vv p}(x;t)=\begin{bmatrix}
     p_0(x;t) \\ p_1(x;t)\\p_2(x;t) \\ \vdots
   \end{bmatrix}.$$

First, notice that by orthogonality, and Fubini's theorem justified by \eqref{A-2}
\[
 |V_\vvbeta(z_1)\rangle =
 \begin{bmatrix}
     \beta_0 \\ \beta_1z_1\\\beta_2z_1^2 \\ \vdots
   \end{bmatrix} = \EE_L\bb{\sif n0 \beta_nz_1^n\frac{p_n(X_{t_L};t_L)}{{\|p_n(\cdot;t)\|_{L^2(\nu_t)}^2}}\vec {\vv p}(X_{t_L};t_L)} =
   \EE_L\bb{\psi_\vvbeta\pp{\frac{X_{t_L}}{\sqrt{t_L}},\frac{z_1}{\sqrt{t_L}}}\vec{\vv p}(X_{t_L};t_L)}.
\]
 Note also that
 \[
 (\bfA+u \bfB+ t \bfC)\vec{\vv p}(x;t)=(x +\sigma u)\vec{\vv p}(x;t).
 \]
  So
 \begin{align*}
    (\bfA+t_L\bfC+u_L\bfB)|V_\vvbeta(z_1)\rangle
&  =  \EE \left[ \psi_\vvbeta(X_{t_L}/\sqrt{t_L},z_1/\sqrt{t_L})(\bfA+t_L\bfC+u_L \bfB)\vec{\vv p}(X_{t_L};t_L)\right]
    \\& =\EE \left[\psi_\vvbeta(X_{t_L}/\sqrt{t_L},z_1/\sqrt{t_L})(X_{t_L}+\sigma u_L)\vec{\vv p}(X_{t_L};t_L)\right]
    \\  & =\EE \left[\psi_\vvbeta(X_{t_L}/\sqrt{t_L},z_1/\sqrt{t_L})(X_{t_L}+\sigma u_L)
\vec{\vv p}(X_{t_{L-1}};t_{L-1})\right],
 \end{align*}
 where in the last step we used the fact that $t_{L-1}>t_L$, and that if $\{M_t\}_{t\ge 0}$ is a martingale with respect to the filtration $\{\calF_t\}_{t\ge0}$, then for all $Y$ measurable with respect to $\calF_s$, and $0\le s<t$, $\EE (YM_s) = \EE (YM_t)$ provided that integrability is guaranteed.

 Hence
 \begin{align*}
    (\bfA+t_{L-1}\bfC & +u_{L-1}\bfB)(\bfA+t_L\bfC+u_L\bfB)|V_\vvbeta(z_1)\rangle
   \\ &= \EE \left[\psi_\vvbeta(X_{t_L}/\sqrt{t_L},z_1/\sqrt{t_L})(X_{t_L}+\sigma u_L)
  (\bfA+t_{L-1}\bfC+u_{L-1} \bfB)\vec{\vv p}(X_{t_{L-1}};t_{L-1})\right]
    \\& = \EE\left[\psi_\vvbeta(X_{t_L}/\sqrt{t_L},z_1/\sqrt{t_L})(X_{t_L}+\sigma u_L)(X_{t_{L-1}} +\sigma u_{L-1})\vec{\vv p}(X_{t_{L-1}};t_{L-1}) \right]
   \\ & =
   \EE\left[\psi_\vvbeta(X_{t_L}/\sqrt{t_L},z_1/\sqrt{t_L})(X_{t_L}+\sigma u_L)
   (X_{t_{L-1}} +\sigma u_{L-1})
   \vec{\vv p} (X_{t_{L-2}};t_{L-2})\right],
 \end{align*}
 where in the last step we used the martingale property and $t_{L-2}>t_{L-1}$.
 Proceeding recurrently, we get
 \begin{align*}
   \prod_{k-1}^L (\bfA+t_k\bfC+u_k\bfB)|V_\vvbeta(z_1)\rangle   & =
 \EE\left[\prod_{k=1}^L(X_{t_k}+\sigma u_k)  \vec{\vv p}(X_{t_{1}};t_{1})\psi_\vvbeta(X_{t_L}/\sqrt{t_L},z_1/\sqrt{t_L}) \right].
 \end{align*}
 Since
 \[
 \langle W_\vvalpha(z_0)|  \vec{\vv p}(X_{\tau_0};\tau_0 )=
 \sum_{n=0}^\infty \alpha_n z_0^n (\tau_0 )^{n/2} p_n(X_{\tau_0}/\sqrt{\tau_0})=
  \phi_\vvalpha(X_{\tau_0}/\sqrt{\tau_0},z_0 \sqrt{\tau_0 }),
  \]
  this
  ends the proof of \eqref{Mkv-ans0}. For the denominator in \eqref{markov-ansatz}, it suffices to take
  $t_1=\cdots=t_L = 1$ and $z_0=z_1=1$.
 \end{proof}

\subsection{Formulae with constant step weights and geometric boundary weights} %

We have shown in Proposition \ref{prop:markov} how to represent the probability generating function in terms of expectations of certain Markov processes. To make use of such a representation, we would like to work with Markov processes with explicit formulae, and also the appropriate choice of boundary weights $\vv\alpha,\vvbeta$ so that the introduced functions $\phi_\vvalpha,\psi_\vvbeta$ has simple formulae.

From now on we restrict to constant step weights and geometric boundary weights. For convenience we recall them here:
\equh\label{eq:abc}
\vv a=(1,1,\dots), \quad \vv b=(\sigma,\sigma,\dots), \quad \vv c=(1,1,\dots), \quad \alpha_n = \rho_0^n, \quad \beta_n = \rho_1^n,
\eque
with $\sigma>0, \rho_0,\rho_1>0, \rho_0\rho_1<1$.
The corresponding orthogonal polynomials
 (depending on $\vv a,\vv b, \vv c$ alone) are determined by
\[
x p_n(x)=p_{n+1}(x)+p_{n-1}(x),
\]
 we are now dealing directly with the Chebyshev polynomials of the second kind. It is well known that the associated measure is the semi-circular law
\[
\nu(dx) = \frac{\sqrt{4-x^2}}{2\pi}\inddd{|x|\le 2}dx.
\]
It is also well known that $|p_n(x)|\leq n+1$ on the support $[-2,2]$ of $\nu$, that $\|p_n\|_2^2=1$,
and that the generating function is
\[
\phi(x,z):=\sum_{n=0}^\infty z^n p_n(x)=\frac{1}{1-xz+z^2}, \; |z|<1, x\in[-2,2].
\]
(The above formulas follow from \cite[(4.5.28), (4.5.20), (4.5.23)]{ismail09classical} by a change of variable $x$ to $x/2$.)

The Markov processes and orthogonal martingale polynomials in Assumption \ref{assumption:1} in this case have been studied.
It is known \citep[Remark 4.1]{Bryc-Wesolowski-03} that  the functions $\{p_n(x;t)\}_{t\ge 0}$ defined by \eqref{pt} are then orthogonal martingale  polynomials for a  Markov process $(X_t)_{t\geq 0}$  with  univariate distributions $P(X_t\in dx)=p_t(x)dx$ given by
\[%
   p_t(x)= \frac{\sqrt{4t-x^2}}{2\pi t}1_{\ccbb{|x|\le 2\sqrt t}}
  ,\quad t>0,
\]
or $p_t = \nu_t$ with $\nu_t$ determined by a dilation of $\nu$, and %
   with transition probabilities $P(X_t\in dy\mid X_s=x)=p_{s,t}(x,y)dy$ for $0\leq s<t$   given by
\[%
  p_{s,t}(x,y)=\frac{1}{2\pi} \frac{(t-s)\sqrt{4t-y^2}}{tx^2+sy^2-(s+t)xy+(t-s)^2}\; \mbox{ for $|x|\leq 2\sqrt{s}, |y|\leq 2\sqrt{t}$}.
\]%
With geometric weights \eqref{eq:geometric}, the functions $\phi_\vvalpha,\psi_\vvbeta$ now can be expressed as,
\[%
\phi_\vvalpha(x,z)=\phi(x,z\rho_0) \qmand \psi_\vvbeta(x,z)=\phi(x,z\rho_1),
\]%
for $z$ such that $|z\rho_0|<1$ and $|z\rho_1|<1$, respectively.

Combining the above with Proposition \ref{prop:markov}, we have arrived at the following. Note that $\EE_L$, the probability measure on $\calM\topp L$, depends now on $\sigma,\rho_0,\rho_1$. We let $\EE$ also denote the expectation for functionals of the associated Markov process $\{X_t\}_{t\ge 0}$. %
\begin{proposition} Assume \eqref{eq:abc}. If $\rho_0\rho_1<1$, $t_1\geq t_2\geq \dots\geq t_L>0$ and $z_0,z_1$ are close enough to 0 so that %
\equh\label{eq:cond}
\rho_0 |z_0|\sqrt{t_1}<1 \qmand \frac{\rho_1 |z_1|}{\sqrt{t_L}}< 1,
\eque
 then
 \begin{equation}\label{eq:Markov}
 \EE_L \left[z_0^{\gamma_0}\prod_{k=1}^L t_k^{\eps_k^-}u_k^{\eps_k^0} z_1^{\gamma_L}
    \right] =
     \frac{1}{\mathfrak C_L}
     \EE\left[  \frac{\prod_{k=1}^L(\sigma u_k+X_{t_k})  }
     {(1-\rho_0 z_0 X_{t_1}+\rho_0^2 z_0^2t_1 )(1-\rho_1 z_1 X_{t_L  }/{t_L  }+\rho_1^2z_1^2/t_L  )}\right].
\end{equation}
Here $\mathfrak C_L$ is the normalization constant \eqref{CL}.
\end{proposition}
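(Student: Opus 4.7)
The plan is to apply Proposition~\ref{prop:markov} directly in the specialized setting \eqref{eq:abc}, after identifying $\phi_\vvalpha$ and $\psi_\vvbeta$ in closed form.

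First, I would observe that with $\vv a=\vv c=(1,1,\dots)$, the three-term recursion \eqref{p-rec} yields the Chebyshev polynomials of the second kind, whose generating function $\phi(x,z)=1/(1-xz+z^2)$ is recorded in the paragraphs above the proposition. The Markov process $(X_t)_{t\ge 0}$ with semicircular marginals $\nu_t$ is known from \citet{Bryc-Wesolowski-03} to fulfill Assumption~\ref{assumption:1}, and $\|p_n\|_2=1$. Under the geometric weights $\alpha_n=\rho_0^n$ and $\beta_n=\rho_1^n$, the two generating functions factor as $\phi_\vvalpha(x,z)=\phi(x,\rho_0 z)$ and $\psi_\vvbeta(x,z)=\phi(x,\rho_1 z)$, valid on the support $[-2,2]$ of $\nu$ whenever $|\rho_0 z|<1$ and $|\rho_1 z|<1$ respectively.

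Next, I would verify the Fubini-type hypothesis \eqref{A-2} needed by Proposition~\ref{prop:markov}. Since $\nu$ is supported on the compact interval $[-2,2]$ with $|p_n(x)|\le n+1$ there and $\int|x|^L\nu(dx)<\infty$, the double series in \eqref{A-2} is dominated by a constant multiple of $\sum_{n,m\ge 0}\rho_0^n\rho_1^m(n+1)(m+1)$, which converges as long as $\rho_0,\rho_1<1$. To cover the full regime $\rho_0\rho_1<1$ with possibly $\max(\rho_0,\rho_1)\ge 1$, I would appeal to analytic continuation: both sides of \eqref{eq:Markov} are analytic in $(\rho_0,\rho_1)$ on the open set $\{\rho_0\rho_1<1\}$, so identifying them on the smaller subdomain is enough.

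With the generating functions in hand, I would substitute at the arguments required by \eqref{markov-ansatz}. A short computation yields
\[
\phi_\vvalpha\!\left(\frac{X_{t_1}}{\sqrt{t_1}},\,z_0\sqrt{t_1}\right)=\frac{1}{1-\rho_0 z_0 X_{t_1}+\rho_0^2 z_0^2 t_1},\qquad \psi_\vvbeta\!\left(\frac{X_{t_L}}{\sqrt{t_L}},\,\frac{z_1}{\sqrt{t_L}}\right)=\frac{1}{1-\rho_1 z_1 X_{t_L}/t_L+\rho_1^2 z_1^2/t_L},
\]
and since $|X_t|\le 2\sqrt{t}$ almost surely, condition \eqref{eq:cond} prevents these denominators from vanishing and legitimizes the generating-function expansion along the path. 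Finally, the denominator in \eqref{markov-ansatz} coincides with $\mathfrak C_L$ from \eqref{CL}: this is either the case $u_k=t_k=z_0=z_1=1$ of \eqref{Mkv-ans0}, or directly \eqref{eq:CL2}. Collecting all the pieces delivers \eqref{eq:Markov}. The only even mildly delicate point is verifying \eqref{A-2} over the entire parameter range $\rho_0\rho_1<1$; everything else is routine substitution once Proposition~\ref{prop:markov} is invoked.
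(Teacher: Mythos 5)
Your proposal is correct and follows essentially the same route as the paper, which obtains \eqref{eq:Markov} by direct specialization of Proposition \ref{prop:markov} to the Chebyshev/semicircle setting with $\phi_\vvalpha(x,z)=\phi(x,\rho_0 z)$ and $\psi_\vvbeta(x,z)=\phi(x,\rho_1 z)$. The one place where you take a slight detour is the convergence issue: you check \eqref{A-2} at $z=1$ (which forces $\rho_0,\rho_1<1$) and then patch the remaining range by analytic continuation, whereas the series that actually occur in \eqref{Mkv-ans0} have arguments $\rho_0 z_0\sqrt{t_1}$ and $\rho_1 z_1/\sqrt{t_L}$, so the bound $|p_n|\le n+1$ shows they converge absolutely exactly under the stated hypothesis \eqref{eq:cond} — no continuation is needed, and the identity $\mathfrak C_L$ in the denominator comes straight from the definition of $\Pr_L$ rather than from the $z=1$ evaluation of \eqref{markov-ansatz}. (If you do keep the continuation argument, note that the right-hand side is analytic only on the region cut out by \eqref{eq:cond}, where the quadratic denominators are bounded below by $(1-\rho_0|z_0|\sqrt{t_1})^2$ and $(1-\rho_1|z_1|/\sqrt{t_L})^2$, not on all of $\{\rho_0\rho_1<1\}$.)
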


\begin{remark}
  If one is interested only in  Theorem \ref{thm:1}(i), then \eqref{eq:Markov} can be simplified as  follows. With $z_0$ and $\tau_1<\tau_2<\dots<\tau_L$ small enough   we have %
\begin{equation}
  \label{q=0-GenFun-EZ}
\EE_L\left[z_0^{\gamma_0}\prod_{k=1}^L \tau_k^{\gamma_k-\gamma_{k-1}}
    \right] =
     \frac{1}{\mathfrak C_L}  \EE\left[  \frac{\prod_{k=1}^L(\sigma +\tau_k X_{1/\tau_k^2})  }
     {(1-\rho_0 z_0 X_{1/\tau_1^2 }+\rho_0^2 z_0^2/\tau_1^2 )(1-\rho_1 \tau_L^2 X_{1/\tau_L^2 }  +\rho_1^2\tau_L^2  )}\right].
\end{equation}
To see this, we use \eqref{eq:Markov} with $z_0=z_0$, $z_1=1$, we take $t_k=1/\tau_k^{2}$ and $u_k=1/\tau_k$. After  multiplying both sides  by $\tau_1\dots \tau_L$,
on the left hand side of \eqref{q=0-GenFun-EZ} we get
$ %
\EE_L
\left[z_0^{\gamma_0}\prod_{k=1}^L z_k^{1-2\eps_k^- -\eps_k^0}
    \right] $. To complete the derivation we note that $1-2\eps_k^- -\eps_k^0=(\eps_k^++\eps_k^0+\eps_k^-)-2\eps_k^- -\eps_k^0=
\eps_k^+-\eps_k^-=\gamma_k-\gamma_{k-1}$.
\end{remark}

The integral formula for
the normalization constant  $\mathfrak C_L$, however, will require additional effort
as we want to include the case where
$\rho_1$
can be larger than 1 in our asymptotic analysis.
In particular the following representation of $\mathfrak C_L$ will be useful.
\begin{proposition} Assume $\rho_0\in(0,1), \rho_0\rho_1\in(0,1)$. Then,
\equh\label{ZL-MP}
\mathfrak C_L%
=\int_\RR\frac{ (x+\sigma)^L}{1-x \rho_0 +\rho_0^2 }  \mu_{\rho_1}(dx),
\eque
where the probability  measure $\mu_{\rho_1}$ of a possibly mixed type is given by
  \begin{equation}\label{M-P}
    \mu_\rho(dx)=\frac{1}{2\pi} \frac{\sqrt{4-x^2}}{1-x \rho +\rho^2 }\inddd{|x|<2}dx+\left(1-\frac{1}{\rho^2}\right)_+ \delta_{\rho+\frac1\rho}(dx).
  \end{equation}
 (Here $x_+:=\max\{0,x\}$.)
\end{proposition}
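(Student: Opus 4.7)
The target identity \eqref{ZL-MP} matches $\mathfrak C_L$ against an integral against a measure that acquires an atom when $\rho_1>1$, so the argument has to work across that regime. My plan is threefold: (i) spectrally decompose $\mathfrak W_{i,j}^{(L)}$ using Chebyshev polynomials of the second kind; (ii) establish the moment identity $\int p_n\,d\mu_\rho=\rho^n$ for every $\rho>0$; (iii) match the two sides via this identity together with the Chebyshev expansion of the integrand on the right.

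For (i), with $\vv a=\vv c=(1,1,\dots)$ and $\vv b=(\sigma,\sigma,\dots)$ the tridiagonal operator $\bfA+\bfB+\bfC$ equals $\sigma I+J$, where $J$ is the Jacobi matrix of the Chebyshev polynomials of the second kind $\{p_n\}$, orthogonal with respect to the semicircle law $\nu(dx)=\tfrac{1}{2\pi}\sqrt{4-x^2}\ind_{|x|\le 2}dx$. The spectral theorem gives
\[
\mathfrak W_{i,j}^{(L)} \;=\; \langle e_i,(\sigma I+J)^L e_j\rangle \;=\; \int_{-2}^{2}p_i(x)p_j(x)(x+\sigma)^L\,\nu(dx).
\]
Plugging into $\mathfrak C_L=\sum_{i,j}\rho_0^i\rho_1^j \mathfrak W_{i,j}^{(L)}$ and interchanging $\sum_i$ with the integral (legal since $\rho_0<1$ and $|p_i(x)|\le i+1$ on $[-2,2]$) collapses the inner sum via $\sum_{i\ge 0}\rho_0^i p_i(x)=(1-\rho_0 x+\rho_0^2)^{-1}$ and produces
\[
\mathfrak C_L \;=\; \sum_{j\ge 0}\rho_1^j\,\hat g_j, \qquad g(x):=\frac{(x+\sigma)^L}{1-\rho_0 x+\rho_0^2}, \qquad \hat g_j:=\int p_j\,g\,d\nu.
\]

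For (ii), the identity $\int p_n\,d\mu_\rho=\rho^n$ is immediate when $\rho\in(0,1)$ from $(1-\rho x+\rho^2)^{-1}=\sum_m\rho^m p_m(x)$ and orthogonality; for $\rho>1$ the same expansion in $1/\rho$ yields $\rho^{-n-2}$ for the density part, while the closed form $p_n(\rho+\rho^{-1})=(\rho^{n+1}-\rho^{-n-1})/(\rho-\rho^{-1})$ (a direct consequence of $p_n(2\cos\theta)=\sin((n+1)\theta)/\sin\theta$) makes the atomic contribution $(1-\rho^{-2})p_n(\rho+1/\rho)$ exactly $\rho^n-\rho^{-n-2}$, and the two pieces add to $\rho^n$. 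For (iii), because $\rho_0<1$ the function $g$ is meromorphic with nearest singularity at $\rho_0+1/\rho_0$, hence analytic on the Bernstein ellipse $E_r=\{z+1/z:|z|=r\}$ for every $1<r<1/\rho_0$; classical theory then supplies $|\hat g_j|\le C\rho_0^j$ (an explicit expansion of $(x+\sigma)^L$ and $(1-\rho_0 x+\rho_0^2)^{-1}$ in $\{p_n\}$ followed by the Chebyshev linearization formula confirms this asymptotic exactly for $j\ge L$). Combining with $|p_j(x)|\le j+1$ on $[-2,2]$ and $|p_j(\rho_1+1/\rho_1)|\le C\rho_1^j$ at the atom, the series $\sum_j \hat g_j p_j$ converges absolutely in $L^1(\mu_{\rho_1})$ under $\rho_0\rho_1<1$, so dominated convergence gives
\[
\int g\,d\mu_{\rho_1} \;=\; \sum_j \hat g_j\int p_j\,d\mu_{\rho_1} \;=\; \sum_j \hat g_j\,\rho_1^j \;=\; \mathfrak C_L,
\]
which is \eqref{ZL-MP}.

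The principal obstacle is precisely the case $\rho_1>1$: the atom of $\mu_{\rho_1}$ sits at $\rho_1+1/\rho_1>2$, where the Chebyshev polynomials $p_j$ grow like $\rho_1^j$, so exchanging sum and integral at that point is not automatic. That the assumption $\rho_0\rho_1<1$ (strictly weaker than $\rho_1<1$) still suffices reflects the fact that the geometric decay of $\hat g_j$ is governed exactly by the singularity of $g$ at $\rho_0+1/\rho_0$, which recedes from $[-2,2]$ precisely as $\rho_0$ shrinks, balancing the atomic growth.
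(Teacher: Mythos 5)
Your proof is correct, and it takes a genuinely different route from the paper. The paper first obtains \eqref{ZL-MP} for $\rho_0,\rho_1<1$ by specializing its Markov representation \eqref{eq:Markov} at $z_0=z_1=t_k=u_k=1$, and then treats $\rho_1\in[1,1/\rho_0)$ by analytic continuation in $\rho_1$: the integral is rewritten as a contour integral over $|z|=1$, the contour is pushed out past the pole at $z=1/\rho$ and back, and the resulting residue terms are identified with the atom of $\mu_{\rho_1}$. You instead work directly with the Karlin--McGregor-type spectral identity $\mathfrak W_{i,j}^{(L)}=\int p_ip_j(x+\sigma)^L\,d\nu$, prove the moment identity $\int p_n\,d\mu_\rho=\rho^n$ for all $\rho>0$ (which makes transparent \emph{why} the atom must be there: it supplies exactly the deficit $\rho^n-\rho^{-n-2}$ left by the absolutely continuous part when $\rho>1$), and resum. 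Your approach is more self-contained and conceptually cleaner about the origin of the atom; the paper's approach reuses machinery it has already built and keeps the computation inside complex analysis. Two small points you could tighten: the boundary case $\rho_1=1$ is not covered by either branch of your moment identity and should be dispatched by continuity (as the paper does); and the summability bound $|\hat g_j|\le C\rho_0^j$ follows even more elementarily from $\hat g_j=\sum_m\rho_0^m\mathfrak W_{j,m}^{(L)}$ together with the band structure $\mathfrak W_{j,m}^{(L)}=0$ for $|j-m|>L$, though you still need the Bernstein-ellipse (or analytic continuation) argument to know that the Chebyshev series of $g$ converges to $g$ \emph{at the atom} $\rho_1+1/\rho_1>2$, which is the one place where your interchange genuinely uses $\rho_1<1/\rho_0$ rather than just $\rho_1<\infty$.
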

We remark that measure \eqref{M-P} is a shifted Marchenko--Pastur law. %
\begin{proof}
We first note that the result holds if both $\rho_0, \rho_1<1$.  Indeed, in this case, we can apply \eqref{eq:Markov} with $z_0=z_1=t_k=u_k=1$. Then the left hand side of  \eqref{eq:Markov} is $1$,  so the right hand side gives  the integral formula for $\mathfrak C_L$ that we want.
We now fix $\rho_0\in(0,1)$. As a function of $\rho_1$, this explicit integral formula extends analytically to complex argument, defining a function
\equh\label{f1}
   f(\rho)  =\frac{1}{2\pi}\int_{-2}^{2}\frac{ (\sigma+x)^L}{(1-x \rho_0 +\rho_0^2)(1-x \rho +\rho^2)}\sqrt{4-x^2}d x,
\eque
which is analytic  in the complex unit disk  $|\rho|<1$.

Next we note that since the edge-weights are bounded by $\max \{\sigma,1\}$, %
the function %
\begin{equation}\label{f}
  \mathfrak C(\rho)=\sum_{i,j=0}^\infty \rho_0^i \rho^j \mathfrak W_{i,j}\topp L,
\end{equation}
is analytic in the complex disk $|\rho|<1/\rho_0$ (see \eqref{CL}).
Since we deduced from \eqref{eq:Markov} that $f(\rho)=\mathfrak C(\rho)$ for $\rho\in(0,1)$,  expression \eqref{f} coincides with \eqref{f1} for $|\rho|<1$ and is its analytic extension   to the complex disk $|\rho|<1/\rho_0$.

 Our goal is to extend the integral representation \eqref{f1} to a larger domain by explicit analytic continuation.
We first re-write \eqref{f1} as a complex integral. Substituting $x=2\cos \theta$, and then $z=e^{i\theta}$, in \eqref{f1}  we get %
\begin{align}
 \frac{1}{2\pi}&\int_{-2}^{2}\frac{ (\sigma+x)^L}{(1-x \rho_0 +\rho_0^2)(1-x \rho +\rho^2)}\sqrt{4-x^2}d x\nonumber
     \\& =\frac{1}{2\pi}\int_{0}^{\pi}\frac{ 4\sin^2\theta(\sigma+2\cos \theta )^L}{(1-2 \rho_0 \cos \theta +\rho_0^2)(1-2 \rho\cos \theta+\rho^2)}  d\theta
    \nonumber\\
&   =\frac{1}{4\pi}\int_{-\pi}^{\pi}\frac{ 4\sin^2\theta(\sigma+2\cos \theta )^L}{(1-2 \rho_0 \cos \theta +\rho_0^2)(1-2 \rho\cos \theta+\rho^2)}  d\theta\nonumber
  \\ &=
   \pp{-\frac{1}{2}}\cdot \frac{1}{2\pi i}\oint_{|z|=1}\frac{(z^2-1)^2\left(\sigma+z+\frac1z\right)^L}{
   (1-\rho_0 z)(z-\rho_0)(1-\rho z)(z-\rho)} \frac{dz}{z},\label{f2}
\end{align}
which is valid for all $|\rho|<1$.
Consider now $|\rho|\in(\rho_0,1)$.
In the last line above one can replace the contour $|z|=1$ by $|z|=r$, and this replacement is valid as long as the circle does not cross any pole of the integrand, that is,  for $r\in(1,1/|\rho|)$.
Next, let the contour cross the pole at $1/\rho$ and (because of the additional factor $-1/2$ in front of the integral)
 add half of the residue at $z=1/\rho$.
 We arrive at %
\equh\label{f3}
f_1(\rho)=   -\frac{1}{4\pi i}\oint_{|z|=r}\frac{(z^2-1)^2\left(\sigma+z+\frac1z\right)^L}{
   (1-\rho_0 z)(z-\rho_0)(1-\rho z)(z-\rho)} \frac{dz}z +\frac12 \frac{\left(\rho ^2-1\right) \left(\rho
   +\frac{1}{\rho }+\sigma \right){}^L}{
   \rho  \left(\rho -\rho _0\right)
   \left(1-\rho _0 \rho \right)},
   \eque
which %
coincides with $f(\rho)$ for $|\rho|\in(\rho_0,1)$, %
by deformation of contour as explained above,
and, with $r$ fixed, gives the analytic extension of $f$ to  all $\rho$ such that $1/r<|\rho|<r$.
In particular, $\mathfrak C(\rho)=f_1(\rho)$  for $1/r<|\rho|<r$. Note that $r$ can be taken arbitrarily close to $1/\rho_0$.

   Next, consider the expression \eqref{f3} for $r\in(1,1/\rho_0)$ and $\rho$ such that $|\rho|\in(1,r)$, and deform the contour of integration back to $|z|=1$. This
subtracts
    half of the residue of the integrand at $z=\rho$.
   Since $r$ can be taken arbitrarily close to $1/\rho_0$,
 $f_1(\rho)$  is equal to
\equh\label{f4}
   f_2(\rho)=
-    \frac{1}{4\pi i}\oint_{|z|=1}\frac{(z^2-1)^2\left(\sigma+z+\frac1z\right)^L}{
   (1-\rho_0 z)(z-\rho_0)(1-\rho z)(z-\rho)} \frac{dz}z
   +\frac{\left(\rho ^2-1\right) \left(\rho
 +\frac{1}{\rho }+\sigma \right){}^L}{\rho
  \left(\rho -\rho _0\right) \left(1-\rho _0
   \rho \right)}
   \eque
   for all $\rho$  such that  $|\rho|\in(1,1/\rho_0)$.
In particular, $\mathfrak C(\rho)=f_2(\rho)$ for $|\rho|\in(1,1/\rho_0)$.
Returning back to the real arguments, we see that  \eqref{f1},  \eqref{f2} and \eqref{f4} can be combined together  into a single formula which gives
\begin{multline*}
\mathfrak C(\rho)=\frac{1}{2\pi}\int_{-2}^{2}\frac{ (\sigma+x)^L}{(1-x \rho_0 +\rho_0^2)(1-x \rho +\rho^2)}\sqrt{4-x^2}d x
\\+
 \frac{ \left(1-\frac{1}{\rho^2}\right)_+ \rho \left(\rho
 +\frac{1}{\rho }+\sigma \right)^L}{
  \left(\rho -\rho_0\right) \left(1-\rho_0
   \rho\right)}, \quad \rho\in(0,1/\rho_0),\rho\ne 1.
\end{multline*}
This formula extends to $\rho=1$ by continuity, establishing \eqref{M-P} for $\rho_1\leq 1$. To prove \eqref{M-P} for $\rho_1\in(1,1/\rho_0)$,
we note that by an elementary calculation the contribution of the atom of $\mu_{\rho_1}$ matches
the additional term arising from the residua in \eqref{f4}:
\begin{align*}
        \int_\RR \frac{(x+\sigma)^L}{1-x\rho_0+\rho_0^2}\pp{1-\frac1{\rho^2}}\delta_{\rho+1/\rho}(dx)
 &       =\pp{1-\frac1{\rho^2}}
           \frac{ (\sigma+x)^L}{1-x \rho_0 +\rho_0^2}\Big|_{x=\rho+1/\rho}
  \\& =\pp{1-\frac1{\rho^2}}\frac{\rho  \left(\rho +\frac{1}{\rho }+\sigma
   \right)^L}{\left(\rho -\rho_0\right)
   \left(1-\rho  \rho _{0 }\right)}.
\end{align*}
\end{proof}

\section{Proof of Theorem \ref{thm:1}}\label{sec:proof}
We first note that since $U_L(x)+H_L(x)+D_L(x)=\floor{Lx}$, it is enough to prove joint convergence of two of the three processes. We will show that
\begin{equation}
   \label{path2}
   \frac{1}{\sqrt{L}}\ccbb{\pp{\gamma_{\floor{Lx}}, H_L(x)-\frac{\sigma }{2+\sigma}\floor{Lx}}}_{x\in[0,1]}\fddto\ccbb{\left(\frac{\sqrt{2}}{\sqrt{2+\sigma}} \wt \eta_x\topp {\A',\C'}, \frac{\sqrt{2\sigma}}{2+\sigma} B_x\right)}_{x\in[0,1]},
\end{equation}
where $\wt \eta\topp {\A',\C'}$, $B$ are independent and %
$\A',\C'$ are given by \eqref{a'c'}.
The above implies the desired joint convergence of $U_L(x), H_L(x), D_L(x)$, as
\begin{align*}
U_L(x)+D_L(x) & = \floor{Lx} - H_L(x),\\
U_L(x)-D_L(x) & = \gamma_{\floor{Lx}} - \gamma_0.
\end{align*}
\arxiv{
Indeed, since $U_L(x)-D_L(x)=\gamma_{\floor{Lx}}-\gamma_{0}$ and $H_L(0)=0$, convergence in \eqref{path2} implies
 $$\frac{1}{\sqrt{L}}\ccbb{\pp{U_L(x)-D_L(x), H_L(x)-\frac{\sigma Lx}{2+\sigma}}}_{x\in[0,1]}\fddto\ccbb{\left(\frac{\sqrt{2}}{\sqrt{2+\sigma}} \wt \eta_x\topp {\A',\C'}, \frac{\sqrt{2\sigma}}{2+\sigma} B_x\right)}_{x\in[0,1]}.$$
Since
$U=\frac{1}{2}(U-D)+\frac12(U+D)=\frac12(U-D)+\frac12(Lx-H_L(x))
=\frac12(U_L-D_L)+\frac12(\frac{ \sigma Lx}{2+\sigma}-H_L(x))+\frac{Lx}{2+\sigma} $,
we see that the first component in \eqref{UHD} is
$$U_L(x)-\frac{Lx}{2+\sigma}=\frac{1}{2}\pp{
U_L(x)-D_L(x)}-\frac{1}{2}\pp{ H_L(x)-\frac{\sigma L x}{2+\sigma}}=\frac12\gamma_{\floor{Lx}}-\frac{1}{2}\pp{ H_L(x)-\frac{\sigma L x}{2+\sigma}}$$
is a linear combination of the pair of processes in \eqref{path2}.

Similarly, $D=\frac12(U+D)-\frac{1}{2}(U-D)$, so
$$D_L(x)-\frac{Lx}{2+\sigma}=-\frac12\gamma_{\floor{Lx}}-\frac{1}{2}\pp{ H_L(x)-\frac{\sigma L x}{2+\sigma}}.
$$
(And of course $(B_x)$ has the same law as $(-B_x)$.)
}

To prove \eqref{path2}, we fix  $d\in\N$, and $\vv x=(x_1,\dots,x_d)$
 with
$x_0:=0< x_1<\cdots<x_d=1$. Denoting %
\[
h_k=H_L(x_k)-\frac{\sigma }{2+\sigma}\floor{L x_k},
\]
 we introduce the Laplace transform $\Phi_{L}$ by the formula
\begin{equation}
  \label{bi-Laplace-0}
   \Phi_L (\vv c, \vv\theta):= \EE_L\bb{\exp\pp{-\sum_{k=0}^d c_k \gamma_{\floor{Lx_k}}+\sum_{k=1}^d \theta_kh_{k}}}.
\end{equation}
In this section, recall that $\EE_L$ is
the  expected value
with respect to
probability measure
$\Pr_L$ on $\calM\topp L$
defined by formula \eqref{Pr0} with parameters \eqref{rho(L)}.
Since $\vv x$ is fixed throughout this proof, we suppress dependence of $\Phi_L$ on $\vv x=(x_0,\dots,x_d)$ in our notation.

Our goal is to compute $\Phi_L (L^{-1/2}\vv c, L^{-1/2}\vv\theta)$ and identify the limit.
The main step in the proof is to show that the expression for the limiting Laplace transform factors and takes the following form.
\begin{proposition}\label{L-limL}
If $\theta_1, \theta_2, \dots, \theta_d\in\RR$    and $c_0,c_1,\dots,c_d>0$
are such that
\equh\label{eq:C}
\C+c_0>0 \qmand  c_d+\A>0,
\eque
 then
  \begin{equation}\label{Lim-ph}
    \lim_{L\to\infty}
    \Phi_L \pp{\frac{\vv c}{\sqrt{L}}, \frac{\vv\theta}{\sqrt{L}}}=  \Psi (\vv c)
      \cdot \exp\pp{\frac{\sigma}{(2+\sigma)^2}\sum_{k=1}^d  (x_k-x_{k-1})\wt s_k^2},
  \end{equation}
with %
$\wt s_k=\sum_{j=k}^d \theta_j, k=1,\dots,d$, and

  \begin{equation}\label{lim-ps}
    \Psi (\vv c)=\frac{\sqrt {2+\sigma}}{\sqrt{2}\pi\mathfrak C_{\A',\C'}} \int_{\RR_+^{d}} e^{-\frac{1}{2+\sigma}\sum_{k=1}^d (x_k-x_{k-1})u_k}
 f(u_1)g(u_d)
  \prod_{k=1}^{d-1} \p_{c_k}(u_{k},u_{k+1})\, d\vv u,
  \end{equation}
  with
\[%
    f(u_1)=\frac{\sqrt{u_1}}{(\C+c_0)^2+u_1}, \quad
     g(u_d)=\frac{1}{(\A+c_d)^2+u_d},
\]
  and
  \begin{equation}\label{biane-p}
  \p_t(x,y) %
  =\frac{2}{\pi}\cdot \frac{t \sqrt{y}}{t^4+(y-x)^2+2 (y+x) t^2}.
\end{equation}

\end{proposition}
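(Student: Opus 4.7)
The plan is to apply the Markov representation \eqref{eq:Markov} with a block-piecewise constant parameter choice that encodes all of $\gamma_0,\gamma_{\floor{Lx_1}},\dots,\gamma_{\floor{Lx_d}}$ and $H_L(x_1),\dots,H_L(x_d)$ into a single Laplace transform, and then to perform an edge-scaling asymptotic analysis of the resulting expectation. Concretely, in the $m$-th block $\{\floor{Lx_{m-1}}+1,\dots,\floor{Lx_m}\}$ I would take $t_j=\tau_m:=e^{-2C_m/\sqrt L}$ and $u_j=\nu_m:=e^{(-C_m+\tilde\theta_m)/\sqrt L}$, where $C_m:=\sum_{k<m}c_k$ and $\tilde\theta_m:=\sum_{k\ge m}\theta_k$, together with $z_0=e^{-c_0/\sqrt L}$ and $z_1=e^{-(c_d+C_d)/\sqrt L}$. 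Because the $c_k$'s are positive the monotonicity $\tau_1\ge\dots\ge\tau_d$ required by \eqref{eq:Markov} holds, and \eqref{eq:cond} is satisfied for large $L$ under \eqref{eq:C}. A direct calculation using $\epsilon_j^+-\epsilon_j^-=\gamma_j-\gamma_{j-1}$ and telescoping within each block then identifies the left-hand side of \eqref{eq:Markov} with $\Phi_L(\vv c/\sqrt L,\vv\theta/\sqrt L)$ times an explicit deterministic prefactor, which in particular absorbs the centering $-\sigma\floor{Lx_k}/(2+\sigma)$ built into $h_k$.

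Since $t_j$ is constant across each block, the Markov product on the right-hand side of \eqref{eq:Markov} collapses to $\prod_{m=1}^d(\sigma\nu_m+X_{\tau_m})^{n_m}$ with $n_m=\floor{Lx_m}-\floor{Lx_{m-1}}$. The main analytic step is to analyze this expectation under the edge scaling $X_{\tau_m}=2\sqrt{\tau_m}-Y_m/L$, the natural zoom at the upper edge of the semicircle where the density $\propto\sqrt{4\tau_m-x^2}$ becomes an $O(1)$ weight $\sqrt{Y_m}$. Expanding $\log(\sigma\nu_m+X_{\tau_m})$ to order $1/L$ and multiplying by $n_m\sim L(x_m-x_{m-1})$ produces three types of contributions: a leading $L\log(2+\sigma)$; a $\sqrt L$-order deterministic correction that must cancel against the prefactor from the previous step together with the subleading asymptotics of $\mathfrak C_L$; and an $O(1)$ contribution that splits (by a short algebraic check I have verified) into a Gaussian factor $\exp\bigl(\frac{\sigma}{(2+\sigma)^2}\sum_m(x_m-x_{m-1})\tilde\theta_m^2\bigr)$ and a linear-in-$Y_m$ factor $\exp\bigl(-\frac{1}{2+\sigma}\sum_m(x_m-x_{m-1})Y_m\bigr)$. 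The Gaussian factor is deterministic and produces the $\vv\theta$-dependent factor in \eqref{Lim-ph}.

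The remaining expectation in $(Y_1,\dots,Y_d)$ is handled by pushing the joint law of $(X_{\tau_1},\dots,X_{\tau_d})$ through the edge scaling, using the explicit semicircle-edge marginal and the transition kernel $p_{s,t}$ from Section \ref{sec:representations}. Exploiting $\sqrt{\tau_m}-\sqrt{\tau_{m+1}}\sim c_m/\sqrt L$ and the factorization of the denominator of $p_{s,t}$ as $(\sqrt t-\sqrt s)^4+(\text{small})$, a careful expansion shows that the rescaled transition density converges to $\p_{c_m}(Y_{m+1},Y_m)$ and the rescaled marginal at $\tau_d$ to $\sqrt{Y_d}/\pi$. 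The symmetry $\sqrt y\,\p_t(x,y)=\sqrt x\,\p_t(y,x)$ of \eqref{biane-p} then lets me reverse the direction of the chain to recover the ordering $\prod_{k=1}^{d-1}\p_{c_k}(u_k,u_{k+1})$ with the $\sqrt{u_1}$ prefactor of \eqref{lim-ps}. Meanwhile, using the identity $1-aX+a^2\tau=(1-a\sqrt\tau)^2+aY/L$ together with $\rho_0=1-\C/\sqrt L$, $\rho_1=1-\A/\sqrt L$, and the above choices of $z_0,z_1$, the two boundary factors $(1-\rho_0 z_0 X_{\tau_1}+\rho_0^2 z_0^2\tau_1)^{-1}$ and $(1-\rho_1 z_1 X_{\tau_d}/\tau_d+\rho_1^2 z_1^2/\tau_d)^{-1}$ reduce to $L/((\C+c_0)^2+Y_1)$ and $L/((\A+c_d)^2+Y_d)$, yielding $f$ and $g$.

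Finally, $\mathfrak C_L$ is analyzed via \eqref{ZL-MP} under the same edge scaling $x=2-u/L$. The continuous part of $\mu_{\rho_1}$ yields an integral of order $\sqrt L\,(2+\sigma)^L$, and when $\A<0$ the atom at $\rho_1+1/\rho_1\sim 2+\A^2/L$ contributes at the same order; both combine so that a single change of variables $u=(2+\sigma)v/2$ converts the result into the integral \eqref{C(a,c)} defining $\mathfrak C_{\A',\C'}$, cleanly producing the effective parameters $\A',\C'$ of \eqref{a'c'}. Assembling all pieces — the deterministic prefactor, the Gaussian factor, the Jacobian $L^{-d}$ from $x_m\to Y_m$, the boundary factors, the edge-scaled joint density, and the asymptotic for $\mathfrak C_L$ — gives \eqref{Lim-ph}. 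The main obstacle is the bookkeeping of the $\sqrt L$-order cancellations between the prefactor, the Taylor expansion, and $\mathfrak C_L$, and in particular checking that the same change of variables $u=(2+\sigma)v/2$ at the normalization stage is what ties the emergence of $\A',\C'$ to the rest of the computation.
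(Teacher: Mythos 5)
Your proposal follows essentially the same route as the paper's own proof: block-constant parameters in the Markov representation \eqref{eq:Markov} to encode the joint Laplace transform, the edge scaling $X_{\tau}\approx 2\sqrt{\tau}-Y/L$ turning the semicircle Markov process into the tangent process with kernel $\p_t$, the same treatment of $\mathfrak C_L$ via \eqref{ZL-MP} including the atom for $\A<0$, and the same factorization into the Gaussian factor and $\Psi(\vv c)$. The only steps you gloss over — both supplied in the paper and routine — are the preliminary reduction to the event $\{X_{t_k}\ge 0\}$ (the bulk of the semicircle contributes only $O((\max\{\sigma,2-\sigma\})^{cL})$ and must be discarded before expanding $\log(\sigma\nu_m+X_{\tau_m})$ near the upper edge) and the uniform domination bound $G_L(\vv y)\le C\exp(-\sum_k(x_k-x_{k-1})y_k/(2+\sigma))$ needed to pass the limit inside the expectation over the growing range $[0,2L]$.
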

The proof of this proposition is postponed to Section \ref{sec:L-limL}.
The transition probability density \eqref{biane-p} appeared as the tangent process in \cite{Bryc-Wang-2015}, and later
as the square of the  radial  part  of  a  3-dimensional  Cauchy  process in \cite[Corollary 1]{KyprianouOConnell2021}.

   The second step in the proof of Theorem \ref{thm:1} is to
  use the following re-write of  \cite[Proposition 4.10]{Bryc-Wang-Wesolowski-2022} which uses change of variables and self-similarity $\p_{at}(a^2 x,a^2 y)=\p_t(x,y)/a^2$,
  $\g_{a^2 x}(a z, a z')=\g_{x}(z,z')/a$, $a>0$, of kernels \eqref{biane-p} and \eqref{gg} to insert an auxiliary parameter $\tau>0$ into the formula.

\begin{proposition}\label{prop:duality} Let $f,g$ be two measurable functions on $\RR_+$.
With $\tau,c_1,\cdots,c_{d-1}>0$ and $0= x_0<x_1<\cdots<x_d\leq 1$, we have

\begin{multline}\label{eq:duality}
\int_{\R_+^{d}}e^{-\tau \summ k1d(x_k-x_{k-1})u_k}f(u_1)\left(\prodd k1{d-1}\p_{c_k}(u_{k},u_{k+1})\right)g(u_{d}) %
d\vv u
\\
=  \frac4\pi\int_{\R_+^{d-1}}e^{- \sum\limits_{k=1}^{d-1}c_k z_k}\what f(z_1)\left(\prodd k2{d-1}\g_{2\tau(x_k-x_{k-1})}(z_{k-1},z_k)\right)\what g(z_{d-1})d\vv z,
\end{multline}
where %
\begin{align}
\label{hat:f}\what f(z) & := \int_{\R_+}f(u^2)\sin(uz)e^{-\tau x_1u^2}du,\\
\label{hat:g}\what g(z)&:= \int_{\R_+}g(u^2)u\sin(uz)e^{-\tau (x_d-x_{d-1})u^2}du,
\end{align}
provided that  the functions under the multiple integrals  in \eqref{eq:duality} are absolutely integrable.
\end{proposition}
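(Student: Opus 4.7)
The plan is to reduce the identity to a pair of spectral representations for the two kernels, after which it becomes a routine bookkeeping exercise. For the killed-Brownian kernel, the standard formula
\[
\g_t(x,y)=\frac{2}{\pi}\int_0^\infty \sin(xu)\sin(yu)\,e^{-tu^2/2}\,du
\]
is immediate from the product-to-sum identity and the Gaussian cosine transform. For $\p_t$, I would first factor the denominator in \eqref{biane-p} as $(t^2+(\sqrt{y}-\sqrt{x})^2)(t^2+(\sqrt{y}+\sqrt{x})^2)$, apply partial fractions in the variable $t^2$ (the residues are $\pm 1/(4\sqrt{xy})$), use $\int_0^\infty e^{-tu}\cos(bu)\,du=t/(t^2+b^2)$, and collapse the resulting cosine difference via $\cos(A-B)-\cos(A+B)=2\sin A\sin B$ to obtain
\[
\p_t(x,y)=\frac{1}{\pi\sqrt{x}}\int_0^\infty e^{-tu}\sin(\sqrt{x}\,u)\sin(\sqrt{y}\,u)\,du.
\]

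Next I would substitute $u_k=v_k^2$ in the left-hand side of \eqref{eq:duality}, producing Jacobian factors $2v_k$; the $1/\sqrt{u_k}=1/v_k$ in each $\p_{c_k}$ representation cancels against the $v_k$ from the Jacobian, leaving only a net $v_d$. Applying Fubini (justified by the stated absolute integrability) to pull the $w_k$ integrals outside, the multiple integral in $\vv v$ factorizes completely. The integration in $v_1$ produces $\what f(w_1)$ by definition \eqref{hat:f}; the integration in $v_d$ produces $\what g(w_{d-1})$ by definition \eqref{hat:g}; and each intermediate integration in $v_k$ for $2\le k\le d-1$ produces $\tfrac{\pi}{2}\,\g_{2\tau(x_k-x_{k-1})}(w_{k-1},w_k)$ by the spectral formula for $\g$ with diffusion time $t=2\tau(x_k-x_{k-1})$.

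Collecting constants $2^d\pi^{-(d-1)}(\pi/2)^{d-2}=4/\pi$ gives the stated prefactor. As a more compact alternative, consistent with the parenthetical remark preceding the statement, the identity is just the $\tau=1$ version applied to the rescaled partition $\tilde x_k:=\tau x_k$: both the LHS exponent and the $\g$-diffusion times scale linearly in $x_k$, while $\what f,\what g$ absorb the same factor in their Gaussian weights, and the self-similarities $\p_{at}(a^2x,a^2y)=\p_t(x,y)/a^2$ and $\g_{a^2x}(az,az')=\g_x(z,z')/a$ express exactly this compatibility under the rescaling. The only real obstacle is verifying Fubini throughout, which is handled by the stated absolute integrability hypothesis; every other step is elementary.
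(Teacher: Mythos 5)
Your proof is correct and follows essentially the same route as the paper's: substitute $u_k=v_k^2$, expand each $\p_{c_k}$ and each intermediate Gaussian-weighted product of sines via the trigonometric integral representations, and apply Fubini so the inner integrations produce $\what f$, the $\g$ kernels (each with a factor $\pi/2$), and $\what g$, with the constants collapsing to $4/\pi$ exactly as you compute. The only difference is that you derive the spectral formula for $\p_t$ by partial fractions where the paper cites it from earlier work, and your closing remark about deducing the general-$\tau$ case from $\tau=1$ by self-similarity matches the paper's own framing of the result as a rescaled version of a known identity.
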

  \arxiv{
  \begin{proof}
 The following is essentially the proof of \cite[Proposition 4.10]{Bryc-Wang-Wesolowski-2022}, with minor modifications:

 With a change of variables $u_k\mapsto u_k^2$, the left-hand side of \eqref{eq:duality} becomes
\begin{align}
2^{d}\int_{\R_+^{d}}e^{{-\tau\summ k1d(x_k-x_{k-1})u_k^2}}u_1f(u_1^2)\left(\prodd k1{d-1}u_{k+1}\p_{c_k}(u_{k}^2,u_{k+1}^2)\right)g(u_{d}^2)d\vv u.\label{eq:duality1}
\end{align}
Recall that \cite[after Eq.~(2.2)]{bryc18dual}
\cite[after Eq.~(2.2)]{Bryc-Wang-2017}
\begin{equation}\label{eq:ypc}
y\p_t(x^2,y^2) = \frac1\pi \frac yx\int_{\R_+} e^{-tz}\sin(xz)\sin(yz)dz,
\end{equation}
and
\begin{equation}\label{trig2g}
  \frac{2}{\pi}\int_{\R_+} e^{-tx^2/2}\sin(xy_1)\sin (xy_2)dx =  \g_t(y_1,y_2).
\end{equation}
Then,
\eqref{eq:duality1} becomes
 \begin{align*}
&  \frac{2^{d}}{\pi^{d-1}}  \int_{\R_+^{d}}u_1f(u_1^2) \pp{\prodd k1{d-1}\frac{u_{k+1}}{u_{k}}\int_{\R_+}e^{-c_kz_k}\sin(u_kz_k)\sin(u_{k+1}z_k)dz_k} g(u_{d}^2)\; e^{-\tau\sum\limits_{k=1}^d(x_k-x_{k-1})u_k^2}d\vv u\\
&  = \frac 4\pi\int_{\R_+^{d-1}}e^{-\sum\limits_{k=1}^{d-1}c_kz_k}\Bigg(\int_{\R_+}e^{-\tau x_1 u_1^2} f(u_1^2)\sin(u_1z_1)du_1  \prodd k2{d-1}\frac2\pi\int_{\R_+}e^{-\tau(x_k-x_{k-1})u_k^2}\sin(u_kz_k)\sin(u_kz_{k-1})du_k \\
&  \quad\times \int_{\R_+}u_{d} e^{-\tau(x_d-x_{d-1})u_d^2} \sin(u_{d}z_{d})g(u_{d}^2)du_d \Bigg)d\vv z\\
&=
\frac 4\pi\int_{\R_+^{d-1}}e^{-\sum\limits_{k=1}^{d-1}c_kz_k}\Bigg(\int_{\R_+}e^{-\tau x_1 u_1^2} f(u_1^2)\sin(u_1z_1)du_1  \prodd k2{d-1}\g_{2\tau(x_k-x_{k-1})}(z_{k-1},z_k)   \int_{\R_+}u_{d} e^{-\tau(x_d-x_{d-1})u_d^2} \sin(u_{d}z_{d-1})g(u_{d}^2)du_d \Bigg)d\vv z
\\ &=
\frac 4\pi\int_{\R_+^{d-1}}e^{-\sum\limits_{k=1}^{d-1}c_kz_k}\Bigg(\what f(z_1) \prodd k2{d-1}\g_{2\tau(x_k-x_{k-1})}(z_{k-1},z_k)  \what g(z_{d-1})\Bigg)d\vv z
.
\end{align*}
\end{proof}
}

 We apply Proposition \ref{prop:duality} to \eqref{lim-ps},  using an auxiliary Markov processes $\zeta$ with  transition probabilities $\Pr(\zeta_t\in dy|\zeta_s=x)=\p_{t-s}(x,y)dy$ for $s<t$  with density
 \eqref{biane-p}.

\begin{proposition}\label{prop:dual} %
 \begin{multline}
 \label{zeta:dual} \frac{\sqrt{2+\sigma}}{\sqrt 2}\frac{1}{\pi \mathfrak C_{\A',\C'}} \int_0^\infty \EE\bb{ \frac{e^{-\frac{1}{2+\sigma}\sum_{k=1}^d (x_k-x_{k-1})\zeta_{s_k}}}{\zeta_{s_1}+(\C+c_0)^2}\middle | \zeta_{s_d}=u} \frac{\sqrt{u}\, du}{(\A+c_d)^2+u}
 \\
 = \EE\bb{e^{ \frac{1}{2+\sigma}\sum_{k=0}^dc_k\wt \eta_{x_k}\topp{\A',\C'}}}.
\end{multline}
\end{proposition}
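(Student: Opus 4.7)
My plan is to identify the LHS of \eqref{zeta:dual} with the quantity $\Psi(\vv c)$ from \eqref{lim-ps}, then invoke Proposition~\ref{prop:duality} to trade the chain of $\p$-kernels for a chain of killed-Brownian kernels $\g$, and finally --- after evaluating the transforms $\what f,\what g$ and a rescaling of variables --- recognize the result as the Laplace functional of $\wt\eta\topp{\A',\C'}$ read off from the density \eqref{eq:eta_pdf}. For the first step I would interpret the conditional expectation on the LHS by using the fact that the Markov process $\zeta$ with transition density $\p_t$ admits the $\sigma$-finite invariant measure $\sqrt u\,du$ on $\R_+$; this can be verified directly from \eqref{biane-p} by a residue computation after the factorization $t^4+(y-x)^2+2(y+x)t^2=(y+t^2-x)^2+4xt^2$. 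With the observation times chosen so that $s_{k+1}-s_k=c_k$, the conditional density of $(\zeta_{s_1},\ldots,\zeta_{s_{d-1}})$ given $\zeta_{s_d}=u$ factors as $\prod_{k=1}^{d-1}\p_{c_k}(u_k,u_{k+1})\cdot\sqrt{u_1}/\sqrt{u_d}$, and combined with the external weight $\sqrt u\,du/((\A+c_d)^2+u)$ together with the factor $1/((\C+c_0)^2+\zeta_{s_1})$ inside the expectation, this reproduces precisely the integrand of \eqref{lim-ps}.

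For the main computation I would apply Proposition~\ref{prop:duality} with $\tau=1/(2+\sigma)$, $f(u)=\sqrt u/((\C+c_0)^2+u)$, and $g(u)=1/((\A+c_d)^2+u)$. The key task is to evaluate the transforms $\what f,\what g$ of \eqref{hat:f}--\eqref{hat:g}; both take the form $\int_0^\infty u\sin(uz)e^{-\beta u^2}/(a^2+u^2)\,du$ with $a=\C+c_0$ or $a=\A+c_d$, each strictly positive by \eqref{eq:C}. Using the elementary identity $u/(a^2+u^2)=\int_0^\infty e^{-at}\sin(ut)\,dt$, interchanging the order of integration, and invoking \eqref{trig2g}, I obtain
\[
\what f(z)=\tfrac{\pi}{2}\int_0^\infty e^{-(\C+c_0)y_0}\g_{2\tau x_1}(z,y_0)\,dy_0,\qquad \what g(z)=\tfrac{\pi}{2}\int_0^\infty e^{-(\A+c_d)y_d}\g_{2\tau(x_d-x_{d-1})}(z,y_d)\,dy_d.
\]
Substituting these back and relabeling the intermediate $z_k$ as $y_k$, the symmetry $\g_t(x,y)=\g_t(y,x)$ fuses everything into a single Markov chain $\prod_{k=1}^d\g_{2\tau(x_k-x_{k-1})}(y_{k-1},y_k)$ weighted by $e^{-(\C+c_0)y_0}e^{-\sum_{k=1}^{d-1}c_k y_k}e^{-(\A+c_d)y_d}$.

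For the last step, the substitution $y_k=\sqrt{2\tau}\,w_k$ together with the self-similarity $\g_{a^2t}(ax,ay)=a^{-1}\g_t(x,y)$ converts the $\g_{2\tau\cdot}$ kernels into $\g_{x_k-x_{k-1}}$; the Jacobian and kernel scale factors combine with the overall prefactor $\sqrt{2+\sigma}/\sqrt 2$ to leave $1/\mathfrak C_{\A',\C'}$ outside the integral. The identifications $\sqrt{2\tau}\,\C=\C'/\sqrt 2$ and $\sqrt{2\tau}\,\A=\A'/\sqrt 2$ then split the boundary exponentials into the density weight $e^{-(\C' w_0+\A' w_d)/\sqrt 2}$ from \eqref{eq:eta_pdf} plus Laplace contributions in $c_0,c_d$, while the middle exponentials supply the Laplace contributions in $c_1,\ldots,c_{d-1}$, reproducing the Laplace functional on the RHS of \eqref{zeta:dual}. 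The main obstacle is the evaluation of $\what f,\what g$ as exponential-weighted $\g$-integrals in the middle step (and keeping careful track of the scaling constants in \eqref{a'c'}); the identification in the first step and the final rescaling are essentially bookkeeping.
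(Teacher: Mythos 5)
Your proposal is correct and follows essentially the same route as the paper: apply Proposition \ref{prop:duality} with $\tau=1/(2+\sigma)$, $f(u)=\sqrt{u}/((\C+c_0)^2+u)$, $g(u)=1/((\A+c_d)^2+u)$, evaluate $\what f,\what g$ via the Laplace transform of the sine and \eqref{trig2g} to turn them into exponentially weighted $\g$-integrals, and finish by the scaling $\g_{a^2t}(ax,ay)=a^{-1}\g_t(x,y)$ to land on the Laplace functional of $\wt\eta\topp{\A',\C'}$. The only material difference is that you make explicit the preliminary identification of the conditional-expectation form of the left-hand side with the multiple integral $\Psi(\vv c)$ via reversibility of $\zeta$ with respect to $\sqrt{u}\,du$, a step the paper leaves implicit; your argument there is sound.
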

\begin{proof}
 We use Proposition \ref{prop:duality} with $\tau=1/(2+\sigma)$, $f(u)=\sqrt{u}/((\C+c_0)^2+u)$ and
  $g(u)=1/((\A+c_d)^2+u)$.
 Since  $\int_0^\infty e^{-s z}\sin(u z)dz=\frac{u}{s^2+u^2}$,
we have
\begin{align*}f(u^2)
& =\frac{u}{(\C+c_0)^2+u^2}=\int_0^\infty e^{-(\C+c_0)z_0}\sin (z_0 u)dz_0,\\
u g(u^2) & =\frac{u}{(\A+c_d)^2+u^2}= \int_0^\infty e^{-(\A+c_d)z_d}\sin (z_d u)dz_d.
\end{align*}
Formulas \eqref{hat:f} and \eqref{hat:g} become
\begin{align*}
 \what f(z_1) & = \int_{\R_+}f(u^2)\sin(uz_1)e^{-\tau x_1u^2}du =
\int_0^\infty e^{-(\C+c_0)z_0}\int_{\R_+}e^{-\tau x_1u^2}\sin(uz_1)\sin (z_0 u)du dz_0
\\ &=\frac{\pi}{2}\int_{\RR_+}  e^{-(\C+c_0)z_0} \g_{2\tau x_1}(z_0,z_1)dz_0,
\end{align*}
and
\begin{align*}
   \what g(z_{d-1}) & =\int_{\R_+}ug(u^2)\sin(uz_d)e^{-\tau (x_d-x_{d-1})u^2}du
\\
& =\int_{\R_+}\pp{\int_0^\infty e^{-(\A+c_d)z_d}\sin (z_d u)dz_d}\sin(uz_d)e^{-\tau (x_d-x_{d-1})u^2}du
\\& =
\int_{\RR_+}e^{-\tau (x_d-x_{d-1})u^2}\sin (z_d u)\sin(uz_d)du dz_d
\\& = \frac{\pi}{2} \int_{\R_+}  e^{-(\A+c_d)z_d} \g_{2\tau(x_d-x_{d-1})}(z_{d-1},z_d) dz_d.
\end{align*}
Thus by \eqref{eq:duality}, the left hand side of
\eqref{zeta:dual} becomes
\begin{align*}
 \frac{\sqrt{2+\sigma}}{\sqrt 2}&\frac{1}{ \mathfrak C_{\A',\C'}}  \int_{\R_+^{d+1}}e^{- \sum\limits_{k=0}^{d}c_k z_k} e^{-\C z_0} \left(\prodd k1{d}\g_{2\tau(x_k-x_{k-1})}(z_{k-1},z_k)\right) e^{-\A z_d} d\vv z
  \\& =  \frac{\sqrt{2+\sigma}}{\sqrt 2}\frac{1}{ \mathfrak C_{\A',\C'}} \int_{\R_+^{d+1}}e^{- \sum\limits_{k=0}^{d}c_k z_k} e^{-\C z_0} \left(\prodd k1{d}\g_{2\tau(x_k-x_{k-1})}(z_{k-1},z_k)\right) e^{-\A z_d} d\vv z\nonumber
  \\ & =\frac{\sqrt{2+\sigma}}{\sqrt 2}
 \frac{1}{ \mathfrak C_{\A',\C'}}  (2\tau)^{-d/2}
  \int_{\R_+^{d+1}}e^{- \sum\limits_{k=0}^{d}c_k z_k} e^{-\C z_0} \left(\prodd k1{d}\g_{x_k-x_{k-1}}(z_{k-1}/\sqrt{2\tau},z_k/\sqrt{2\tau})\right) e^{-\A z_d} d\vv z,\nonumber
\end{align*}
where we used scaling $\g_{2\tau x}(z,z')=\frac{1}{\sqrt{2\tau}}\g_x(z/\sqrt{2\tau},z'/\sqrt{2\tau})$.
Substituting $z_k'=z_k/\sqrt{2 \tau}$ into the integral and dropping the primes on $z_k'$, we get
\[
 \frac{1}{ \mathfrak C_{\A',\C'}}
  \int_{\R_+^{d+1}}e^{- \sum\limits_{k=0}^{d}c_k z_k\sqrt{2\tau}} e^{-\C \sqrt{2\tau} z_0} \left(\prodd k1{d}\g_{x_k-x_{k-1}}(z_{k-1},z_k)\right) e^{-\A\sqrt{2\tau} z_d} d\vv z,
  \]
  which we recognize as the desired right-hand side of \eqref{zeta:dual}.
\end{proof}

\begin{proof}[Proof of  Theorem \ref{thm:1}]
By Proposition \ref{L-limL}, the limiting Laplace transform factors.
 Proposition \ref{prop:dual} identifies the first factor in \eqref{Lim-ph}   as the Laplace transform of the first component of the process in \eqref{path2}.  We recognize the second factor in \eqref{Lim-ph} as the Laplace transform  of the second  component  of the process in \eqref{path2}. To see this, we   write it as
\[%
  \EE\left[e^{\frac{\sqrt{2\sigma}}{2+\sigma}\sum_{k=1}^d \wt s_k (B_{x_{k}}-B_{x_{k-1}})}\right]
   =   \EE\left[e^{\frac{\sqrt{2\sigma}}{2+\sigma}\sum_{k=1}^d \theta_k B_{x_{k}}}\right].
\]%
This identifies the limit of the Laplace transforms \eqref{Lim-ph}
as a Laplace transform of a probability measure.
To conclude the proof we invoke \cite[Theorem A.1]{Bryc-Wang-2017ASEP}, which asserts that convergence of Laplace transforms on an open set
to a Laplace transform of a probability measure implies convergence in distribution.
\end{proof}

\subsection{Proof of Proposition {\protect \ref{L-limL}}} \label{sec:L-limL}
By symmetry, we assume $\C>0$.
We start by rewriting the expression \eqref{bi-Laplace-0}  solely in terms of $\eps_k^-$ and $\eps_k^0$.   The first step is to write (recall that $h_0=0$)
\begin{align*}
 \Phi_L (\vv c, \vv\theta) & =  \EE_L\bb{e^{-\gamma_0 \sum_{j=0}^d c_j}\exp\pp{-\sum_{k=1}^d (\gamma_{L_k}-\gamma_{L_{k-1}}) \sum_{j=k}^dc_j+\sum_{k=1}^d (h_{k}-h_{{k-1}})\sum_{j=k}^d \theta_j}}%
     \\ & =
  \EE_L\bb{ e^{-s_0 \gamma_0}\exp\pp{-\sum_{k=1}^ds_k \sum_{j=L_{k-1}+1}^{L_k}(\eps_j^+-\eps_j^-)+\sum_{k=1}^d \wt s_k \sum_{j=L_{k-1}+1}^{L_k}\pp{ \eps_j^0-\frac\sigma{2+\sigma}}}},%
\end{align*}
with
\[%
  s_k=\sum_{j=k}^d c_j, k=0,\dots,d,\quad \wt s_k=\sum_{j=k}^d \theta_j,\quad k=1,\dots,d.
\]%
Since $\eps_k^+-\eps_k^-=1-\eps_k^0-2 \eps_k^-$, we get
 \begin{align*}
   \Phi_L (\vv c, \vv\theta) & =e^{-\sum_{k=1}^d (L_k-L_{k-1})(s_k+\sigma \wt s_k /(2+\sigma))}\\
& \quad   \times  \EE_L\bb{ e^{-s_0 \gamma_0}\exp\pp{2\sum_{k=1}^ds_k \sum_{j=L_{k-1}+1}^{L_k}\eps_j^-+\sum_{k=1}^d( \wt s_k+s_k) \sum_{j=L_{k-1}+1}^{L_k}\eps_j^0}}. \end{align*}
We therefore get
\begin{multline*}
   \Phi_L \pp{\frac{\vv c}{\sqrt L},  \frac{\vv\theta}{\sqrt L}}
   \\=\prod_{k=1}^d t_{L,k}^{-(L_k-L_{k-1})/2} \prod_{k=1}^d v_{L,k}^{- (L_k-L_{k-1})\frac{\sigma}{2+\sigma}} \EE_L \left[z_{L,0}^{\gamma_0}\pp{\prod_{k=1}^d t_{L,k}^{\sum_{j=L_{k-1}+1}^{L_k}\eps_j^-}u_{L,k}^{\sum_{j=L_{k-1}+1}^{L_k}\eps_j^0}}
    \right],
\end{multline*}
  with
\[%
  z_{L,0}=e^{-s_0/\sqrt{L}},\quad   t_{L,k}=e^{2 s_k/\sqrt{L}}, \quad%
    v_{L,k}=e^{ \wt s_k/\sqrt{L}}, \quad%
   u_{L,k}=\sqrt{t_{L,k}}v_{L,k}. %
\]%

Next, we apply the Markov representation \eqref{eq:Markov},  but before
that
we verify that
\eqref{eq:cond} holds.
 We note that $t_{L,1}\geq t_{L,2}\geq \dots\geq t_{L,d}$ and that %
our assumptions on the coefficients $c_0,\dots,c_d$ in \eqref{eq:C}   guarantee that
 \[
 \rho_{L,0}\abs{ z_{L,0}}\sqrt{ t_{L,1}} = {\pp{1-\frac\C{\sqrt L}}e^{-c_0/\sqrt L}}< 1 \qmand \frac{\rho_{L,1}}{\sqrt{t_{L,d}}}  = {\pp{1-\frac\A{\sqrt L}}e^{-c_d/\sqrt L}} < 1
 \]
 for $L$ large enough. We assume implicitly $L$ large enough so the above holds from now on.  Thus after some %
 rewriting
 we have
\begin{multline*}%
  \Phi_L \pp{\frac{\vv c}{\sqrt L},  \frac{\vv\theta}{\sqrt L}}       =
       \frac{\prod_{k=1}^d v_{L,k}^{-\sigma (L_k-L_{k-1})/(2+\sigma)}}{\mathfrak C_L}
\\       \times
     \EE\left[  \frac{ \prod_{k=1}^d(\sigma v_{L,k}+X_{t_{L,k}}/\sqrt{t_{L,k}})^{L_k-L_{k-1}}  }
     {(1-\rho_{L,0}  z_{L,0} X_{t_{L,1}}+\rho_{L,0} ^2 z_{L,0}^2t_{L,1})(1-\rho_{L,1} X_{t_{L,d}  }/t_{L,d}+\rho_{L,1}^2/t_{L,d}  )}\right].
\end{multline*}
\begin{lemma}%
 The normalizing constant satisfies:
   \begin{equation}\label{KLsim}
    \mathfrak C_L\sim
 (2+\sigma)^LL^{1/2}\cdot
       \frac{\sqrt{2 }}{\sqrt{2+\sigma}} \cdot \mathfrak C_{\A',\C'} \mbox{ as $L\to\infty$},
  \end{equation}
  where  $\mathfrak C_{\A,\C}$ is given by   \eqref{eq:C_ac} and $\A',\C'$ are from \eqref{a'c'}.
\end{lemma}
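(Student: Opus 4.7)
The plan is to apply Laplace's method to the integral representation \eqref{ZL-MP}, $\mathfrak C_L=\int_\RR (x+\sigma)^L/(1-x\rho_{L,0}+\rho_{L,0}^2)\,\mu_{\rho_{L,1}}(dx)$. The integrand is dominated by $(x+\sigma)^L$, which is maximized at the right edge $x=2$ of the semicircle support, so the whole contribution will come from a $1/\sqrt L$-neighborhood of $x=2$.

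First I would handle the absolutely continuous part of $\mu_{\rho_{L,1}}$. Changing variables $x=2\cos\theta$ gives
\[
 \mathfrak C_L^{ac}=\frac{1}{\pi}\int_0^\pi \frac{(2\cos\theta+\sigma)^L\cdot 4\sin^2\theta}{(1-2\rho_{L,0}\cos\theta+\rho_{L,0}^2)(1-2\rho_{L,1}\cos\theta+\rho_{L,1}^2)}d\theta,
\]
and then substituting $\theta=t/\sqrt L$ and using $\cos\theta=1-\theta^2/2+O(\theta^4)$, $\sin\theta\sim\theta$, $1-\rho_{L,j}\sim \C/\sqrt L,\A/\sqrt L$, one easily checks the pointwise limits
\[
 (2\cos\theta+\sigma)^L\to (2+\sigma)^L e^{-t^2/(2+\sigma)},\qquad L\cdot(1-2\rho_{L,0}\cos\theta+\rho_{L,0}^2)\to t^2+\C^2,
\]
and the analogous limit with $\A$. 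A straightforward uniform estimate of the shape $(2\cos\theta+\sigma)^L\leq (2+\sigma)^L e^{-ct^2}$ on $\theta\in[0,\pi/2]$, together with a crude exponentially small bound on $[\pi/2,\pi]$, justifies dominated convergence and yields
\[
 \mathfrak C_L^{ac}\sim \frac{2\,(2+\sigma)^L\sqrt L}{\pi}\int_0^\infty \frac{t^2 e^{-t^2/(2+\sigma)}}{(t^2+\A^2)(t^2+\C^2)}dt.
\]

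Next, if $\A<0$ then $\rho_{L,1}>1$ for large $L$ and $\mu_{\rho_{L,1}}$ carries an atom at $\rho_{L,1}+1/\rho_{L,1}=2+\A^2/L+O(L^{-3/2})$ with mass $(1-1/\rho_{L,1}^2)_+\sim -2\A/\sqrt L$. Plugging this into \eqref{ZL-MP} I would get a second contribution of order $(2+\sigma)^L\sqrt L$,
\[
 \mathfrak C_L^{atom}\sim (2+\sigma)^L\sqrt L\cdot \frac{-2\A\,e^{\A^2/(2+\sigma)}}{\C^2-\A^2}\,\ind_{\A<0}.
\]

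Finally, I would identify the sum of these two pieces with $(\sqrt 2/\sqrt{2+\sigma})\,\mathfrak C_{\A',\C'}$ via the explicit formula \eqref{eq:C_ac} for $\mathfrak C_{\A,\C}$ cited in the paper; after the change of variable $t=\sqrt{(2+\sigma)/2}\,s$ in the a.c.\ integral, this becomes a direct algebraic comparison involving the constants $\A'=2\A/\sqrt{2+\sigma}$, $\C'=2\C/\sqrt{2+\sigma}$. The main obstacle I expect is handling the case $\A<0$ correctly: one must combine $\mathfrak C_L^{ac}$ and $\mathfrak C_L^{atom}$ and verify that the combined answer still matches $\mathfrak C_{\A',\C'}$ (which extends analytically across $\A=0$ while the splitting itself does not). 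Once the identification is verified at $\A>0$, analytic continuation in $\A$ of both sides of \eqref{KLsim} (both depend analytically on $\A$ through bounded, absolutely convergent integrals after the rescaling) gives the result for all $\A\in\RR$ with $\A+\C>0$.
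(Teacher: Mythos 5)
Your computation of the two contributions is essentially the paper's proof: the paper also splits \eqref{ZL-MP} into the absolutely continuous part and the atom, localizes the integral near the edge $x=2$ (via the substitution $x=2-u^2/L$, which is equivalent to your $x=2\cos\theta$, $\theta=t/\sqrt L$), and arrives at exactly your limits
\[
\mathfrak C_L^{ac}\sim \frac{2(2+\sigma)^L\sqrt L}{\pi}\int_0^\infty \frac{t^2 e^{-t^2/(2+\sigma)}}{(t^2+\A^2)(t^2+\C^2)}\,dt,
\qquad
\mathfrak C_L^{atom}\sim (2+\sigma)^L\sqrt L\cdot\frac{2\A\, e^{\A^2/(2+\sigma)}}{\A^2-\C^2}\,\ind_{\A<0}.
\]
The identification of the first integral with $\sqrt{2/(2+\sigma)}\,\mathfrak C_{|\A'|,|\C'|}$ is the content of \eqref{C-no-CC} with $\tau=2/(2+\sigma)$, so your ``direct algebraic comparison'' is just a rederivation of that appendix lemma.

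The one step that does not work as written is your closing move for $\A<0$: ``analytic continuation in $\A$ of both sides of \eqref{KLsim}.'' An asymptotic equivalence is a statement about a pointwise limit in $L$, and pointwise limits do not propagate by analytic continuation — knowing that $\mathfrak C_L/((2+\sigma)^L\sqrt L)$ converges to an analytic function of $\A$ for $\A>0$ tells you nothing about convergence for $\A<0$ unless you establish locally uniform convergence on a complex neighborhood (a Vitali-type argument you have not set up, and which is awkward here precisely because the atom appears discontinuously at $\A=0$). What is actually needed — and what the paper does — is the elementary verification that the two pieces you computed recombine correctly: since the a.c.\ part produces $\mathfrak C_{|\A'|,\C'}=\mathfrak C_{-\A',\C'}$ when $\A<0$, one must check
\[
\mathfrak C_{-\A',\C'}+\frac{2\sqrt 2\,\A' e^{\A'^2/4}}{\A'^2-\C'^2}=\mathfrak C_{\A',\C'},
\]
which follows from \eqref{eq:C_ac} and the identity $\erfc(x)+\erfc(-x)=2$, i.e.\ $H(x)+H(-x)=2e^{x^2}$ with $x=\A'/2$ (note $e^{\A^2/(2+\sigma)}=e^{\A'^2/4}$). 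You have all the ingredients for this check, so replace the analytic-continuation sentence with it. Two smaller omissions: the case $\A=\C$ needs the second branch of \eqref{eq:C_ac} (the paper treats it separately), and the whole argument tacitly assumes $\C>0$ so that $\rho_{L,0}<1$ and \eqref{ZL-MP} applies; the case $\C\le 0<\A$ is reduced to this one by the symmetry $\A\leftrightarrow\C$.
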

 \begin{proof}
We use the explicit form of expression \eqref{ZL-MP}: %
   \begin{align}
    \mathfrak C_L  &   =\frac{1}{2\pi}\int_{-2}^2
     \frac{(\sigma+ x)^L\sqrt{4-x^2}}{
          {(1-\rho_{L,0}   x+\rho_{L,0} ^2 )(1-\rho_{L,1}  x+\rho_{L,1}^2)}
     }dx+ \pp{\rho_{L,1}-\frac{1}{\rho_{L,1}}}_+ \frac{\pp{\rho_{L,1}+\frac1{\rho_{L,1}}+\sigma}^L}{(\rho_{L,1}-\rho_{L,0})(1-\rho_{L,0}\rho_{L,1})} \nonumber \\ %
&    =:I_L+D_L. \label{I+D}
\end{align}
     The dominant term in the integral $I_L$ comes from the integral over $[0,2]$.
   This is easy to see  as
   $|x+\sigma|^L\leq \pp{max\{2-\sigma,\sigma\}}^L=o((2+\sigma)^L)$ for $-2\leq x\leq 0$ when $\sigma>0$.

     The argument for the asymptotics of the integral over $[0,2]$  relies on the substitution $x=2-u^2/L$ that appeared in similar context in paper \cite{Bryc-Wang-2015}  and later in \cite{Bryc-Wang-2017ASEP}, \cite{Bryc-Wang-Wesolowski-2022}.
  \begin{align*}
   I_L &   \sim  \frac{1}{2\pi}\int_{0}^2
     \frac{(\sigma+ x)^L\sqrt{4-x^2}}
          {(1-\rho_{L,0}   x+\rho_{L,0} ^2 )(1-\rho_{L,1}  x+\rho_{L,1}^2)} dx
         \\ &\sim  \frac{(2+\sigma)^L}{2\pi}\int_0^{\sqrt{2L}}
         \frac{ \pp{1-\frac{u^2}{(2+\sigma)L}}^L}{
         \left((\frac{\C}{\sqrt{L}})^2+(1-\frac{\C}{\sqrt{L}})\frac{u^2}{L}\right)\left((\frac{\A}{\sqrt{L}})^2+(1-\frac{\A}{\sqrt{L}})\frac{u^2}{L}\right)} \sqrt{4-\frac{u^2}{L}}\, \frac{2}{\sqrt{L}} u^2 \frac{d u}{L}\nonumber
         \\\
       &  \sim
         \frac{2}{\pi}(2+\sigma)^LL^{1/2}\int_0^{\sqrt{2L}}
         \frac{ \pp{1-\frac{u^2}{(2+\sigma)L}}^L}{
         \left(\C^2+u^2\right)
         \left(\A^2+u^2\right)}  u^2 d u\sim (2+\sigma)^LL^{1/2}\frac{2}{\pi}\int_0^{\infty}
         \frac{ u^2e^{-\frac{u^2}{2+\sigma}}du}{
         \left(\C^2+u^2\right)
         \left(\A^2+u^2\right)}.\nonumber
  \end{align*}
 The integral is an explicit expression \eqref{C-no-CC}, compare  \cite[(4.38) and Lemma A.2]{Bryc-Wang-Wesolowski-2022}.

 For $\A\ne\C$ we get
  \begin{equation}
   \label{I-sim}
   I_L\sim (2+\sigma)^L \sqrt{L} \cdot \frac{2}{\sqrt{2+\sigma}}\frac{|\C'| H(|\C'|/2) - |\A'| H(|\A'|/2)}{{\C'}^2-{\A'}^2}=(2+\sigma)^L \sqrt{L} \cdot \frac{\sqrt{2}}{\sqrt{2+\sigma}}\mathfrak C_{|\A'|,|\C'|}.
   \end{equation}
 This proves \eqref{KLsim} for $\A\ne \C$. For $\A=\C$ we get
 \begin{equation*}
   I_L\sim (2+\sigma)^L \sqrt{L} \cdot
\frac{\sqrt{\pi } \left(\A'^2+2\right)
   H\left(\frac{\A'}{2}\right)-2
   \A'}{2 \sqrt{\pi } \A'
   \sqrt{\sigma +2}}=(2+\sigma)^L \sqrt{L} \cdot \frac{\sqrt{2}}{\sqrt{2+\sigma}} \mathfrak C_{\A',\A'}.
 \end{equation*}
   Thus \eqref{KLsim} holds also  for $\A=\C>0$.

   When $\A<0,\C>0$ (but $\A+\C>0$) we need to include the contribution of the discrete part.
   It is easy to see that with $\rho_{L,1}=1-\A/\sqrt{L}>1$
   the discrete part in \eqref{I+D} is
   \begin{align*}
     D_L&=\frac{\A L \left(2 \sqrt{L}-\A\right)
   \left(-\frac{\A}{\sqrt{L}}+\frac{1}{1-\frac{\A}
   {\sqrt{L}}}+\sigma +1\right)^L}{(\A-\C)
   \left(\sqrt{L}-\A\right) \left(\sqrt{L}
   (\A+\C)-\A \C\right)}
   \sim \sqrt{L}(2+\sigma)^L \cdot
   \frac{2 \A
   \left(1+\frac{\A^2}{L (\sigma
   +2)}\right)^L}{\A^2-\C^2}\\
&   \sim
\sqrt{L}(2+\sigma)^L \cdot
   \frac{2 \A
   e^{\frac{\A^2}{\sigma
   +2}}}{\A^2-\C^2}= \sqrt{L}(2+\sigma)^L  \cdot \frac{\sqrt{2}}{\sqrt{2+\sigma}}
    \cdot
   \frac{2\sqrt{2} \A'
   e^{\frac{\A'^2}{4}}}{\A'^2-\C'^2}.\nonumber
   \end{align*}
   Combining this with \eqref{I-sim} we see that for $\A<0$, we have
   \[\mathfrak C_L\sim \sqrt{L}(2+\sigma)^L  \cdot \frac{\sqrt{2}}{\sqrt{2+\sigma}}
   \pp{\mathfrak C_{-\A',\C'}+  \frac{2\sqrt{2}\, \A'
   e^{\frac{\A'^2}{4}}}{\A'^2-\C'^2}}.
   \]
   We now use the  identity $\erfc(x)+\erfc(-x)=2 $ to verify that
   $$\mathfrak C_{-\A',\C'}+   \frac{2\sqrt{2}\, \A'
   e^{\frac{\A'^2}{2}}}{\A'^2-\C'^2}= \mathfrak C_{\A',\C'}.$$
 This completes the proof.
  \end{proof}
It turns out that it suffices to restrict %
the expectation to %
the event
$\{X_{t_k}\ge 0\}$, as shown below.
\begin{lemma}%
If $\sigma>0$ then
    \begin{multline}\label{phi-re}
  \Phi_L \pp{\frac{\vv c}{\sqrt L},  \frac{\vv\theta}{\sqrt L}}
  \sim      \frac{\prod_{k=1}^d v_{L,k}^{-\sigma (L_k-L_{k-1})/(2+\sigma)}}{\mathfrak C_L}
  \\
\times     \EE\left[  \frac{ \prod_{k=1}^d\spp{(\sigma v_{L,k}+X_{t_{L,k}}/\sqrt{t_{L,k}})^{L_k-L_{k-1}}\inddd{X_{t_{L,k}}\geq 0}} }
     {(1-\rho_{L,0}  z_{L,0} X_{t_1}+\rho_{L,0} ^2 z_{L,0}^2t_{L,1})(1-\rho_{L,1} X_{t_{L,d} }/t_{L,d}+\rho_{L,1}^2/t_{L,d}  )}\right].
 \end{multline}
\end{lemma}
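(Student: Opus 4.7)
My approach is to split the expectation $E_L$ in \eqref{eq:Markov} (which appears in $\Phi_L(\vv c/\sqrt L,\vv\theta/\sqrt L)$ after multiplication by the displayed prefactor and division by $\mathfrak C_L$) as $E_L=E_L^++E_L^-$, where $E_L^+$ is the contribution from the event $A:=\bigcap_{k=1}^d\{X_{t_{L,k}}\ge 0\}$ (giving precisely the right-hand side of \eqref{phi-re}) and $E_L^-$ is the contribution from $A^c=\bigcup_k\{X_{t_{L,k}}<0\}$. It suffices to show that $|E_L^-|\cdot(\text{prefactor})/\mathfrak C_L\to 0$; combined with the fact (proved in the subsequent steps of the proof of Proposition \ref{L-limL}) that the right-hand side of \eqref{phi-re} converges to the non-zero limit \eqref{Lim-ph}, this yields the stated asymptotic equivalence.

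The key pointwise bound on the numerator exploits the support $[-2\sqrt t,2\sqrt t]$ of $X_t$. On $\{X_{t_{L,k}}<0\}$ the rescaled value $X_{t_{L,k}}/\sqrt{t_{L,k}}$ lies in $[-2,0)$, so
\[
\bigl|\sigma v_{L,k}+X_{t_{L,k}}/\sqrt{t_{L,k}}\bigr|\le\max\bigl(\sigma v_{L,k},\,2-\sigma v_{L,k}\bigr)\le \sigma_*+o(1),
\]
with $\sigma_*:=\max(\sigma,2-\sigma)$. Since $\sigma>0$ we have $\sigma_*<2+\sigma$. For the indices $j\neq k$ one uses the crude bound $|\sigma v_{L,j}+X_{t_{L,j}}/\sqrt{t_{L,j}}|\le 2+\sigma+o(1)$. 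Consequently, on $\{X_{t_{L,k}}<0\}$ the full product in the numerator is bounded in absolute value by $(r+o(1))^{L_k-L_{k-1}}(2+\sigma+o(1))^L$, where $r:=\sigma_*/(2+\sigma)\in(0,1)$.

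For the two denominator factors, after the substitution $X_{t_{L,j}}=y\sqrt{t_{L,j}}$ each takes the form $1-\rho y+\rho^2$ with $y\sim\nu$ (semicircle law) and $\rho=1-O(1/\sqrt L)$; the hypotheses $\C+c_0>0$ and $\A+c_d>0$ ensure $\rho<1$ for large $L$. The Chebyshev generating function $(1-\rho y+\rho^2)^{-1}=\sum_n\rho^n p_n(y)$ together with the orthonormality of $(p_n)$ gives
\[
\int(1-\rho y+\rho^2)^{-2}\nu(dy)=\sum_n\rho^{2n}=(1-\rho^2)^{-1}=O(\sqrt L),
\]
and Cauchy--Schwarz yields $\EE[1/(D_1D_d)]=O(\sqrt L)$. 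Pulling the uniform numerator bound out of the expectation and summing over $k$,
\[
|E_L^-|\le\sum_{k=1}^d(r+o(1))^{L_k-L_{k-1}}(2+\sigma+o(1))^L\cdot O(\sqrt L)=O\bigl((2+\sigma)^L\sqrt L\cdot r^{cL}\bigr),
\]
with $c:=\tfrac12\min_k(x_k-x_{k-1})>0$ (valid for large $L$ since $L_k-L_{k-1}=L(x_k-x_{k-1})+O(1)$). Dividing by $\mathfrak C_L\sim\mathrm{const}\cdot(2+\sigma)^L\sqrt L$ from \eqref{KLsim} and multiplying by the prefactor $\prod_kv_{L,k}^{-\sigma(L_k-L_{k-1})/(2+\sigma)}=e^{O(\sqrt L)}$, the exponential decay $r^{cL}$ dominates and the ratio tends to $0$.

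The main obstacle is that both denominator factors develop integrable singularities at the upper edge $y=2$ of the semicircular support as $L\to\infty$, producing factors growing polynomially in $L$: a crude uniform bound like $1/D_i=O(L)$ would be too weak against the $e^{O(\sqrt L)}$ prefactor. The generating-function identity delivers the sharp $O(\sqrt L)$ bound on $\EE[1/D_i^2]$, which via Cauchy--Schwarz is just enough to allow the exponential decay $r^{cL}$ coming from the numerator on $A^c$ to beat both the $O(\sqrt L)$ integral and the $e^{O(\sqrt L)}$ prefactor.
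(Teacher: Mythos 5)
Your proof is correct, and its skeleton is the same as the paper's: split off the events $\{X_{t_{L,k}}<0\}$, use the support bound $X_t/\sqrt t\in[-2,2]$ to gain the exponentially small factor $\bigl(\max\{\sigma,2-\sigma\}/(2+\sigma)\bigr)^{L_k-L_{k-1}}$ on each bad event (this is exactly where $\sigma>0$ enters), control the denominator, and compare against $\mathfrak C_L\sim c\,(2+\sigma)^L\sqrt L$.

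The one genuine difference is your treatment of the denominator. The paper simply bounds each factor pointwise from below by $(1-\rho)^2$ (i.e.\ $1-\rho y+\rho^2\ge(1-\rho)^2$ for $|y|\le2$), which yields an $O(L^2)$ loss; you instead compute $\int(1-\rho y+\rho^2)^{-2}\,\nu(dy)=(1-\rho^2)^{-1}=O(\sqrt L)$ via the Chebyshev generating function and orthonormality, and apply Cauchy--Schwarz to get $O(\sqrt L)$. Your bound is sharper and correct, but your closing claim that the crude $O(L)$ pointwise bound ``would be too weak against the $e^{O(\sqrt L)}$ prefactor'' is mistaken: the error term carries the factor $r^{cL}$ with $r<1$, which decays exponentially in $L$ and therefore beats any fixed power of $L$ times $e^{O(\sqrt L)}$. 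Indeed the paper uses precisely the crude bound and concludes the error is $C(2+\sigma)^L L^2 r^{L(x_j-x_{j-1})}=o\bigl((2+\sigma)^L\sqrt L\bigr)$. So your refinement is valid but not needed; the ``main obstacle'' you identify is not actually an obstacle.
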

\begin{proof}Since $-2\sqrt{t}\leq X_t\leq 2 \sqrt{t}$ for $t>0$, $x_j-x_{j-1}>0$  and $t_
{L,d}\geq 1$, the expected value over the set $X_{t_{L,j}}<0$ is bounded by a factor
\begin{align*}
  \frac{(\max\{\sigma,2-\sigma\})^{L_j-L_{j-1}}(2+\sigma)^{L-(L_j-L_{j-1})}}
{(1-\rho_{L,0} z_{L,0}\sqrt{t_{L,1}})^2(1-\rho_{L,1}/\sqrt{t_{L,d}})^2}
& \sim C (2+\sigma)^L L^2\pp{\frac{\max\{\sigma,2-\sigma\}}{2+\sigma}}^{L(x_j-x_{j-1})}\\
&=o\pp{(2+\sigma)^L \sqrt{L}} \mbox {as $L\to\infty$}
\end{align*}
By \eqref{KLsim} this proves \eqref{phi-re}.
 \arxiv{Indeed, noting that $s_d=c_d$, we have
 $$1-\rho_1 X_{t_d }t_d  +\rho_1^2/t_d\geq 1-2 \rho_1/\sqrt{t_d}+\rho_1^2/t_d=(1-\rho_1/\sqrt{t_d})^2\sim\pp{\frac{\A+c_d}{\sqrt{L}}}^2   \mbox{ as $L\to\infty$}
 $$

 Similarly, noting that $s_0-s_1=c_0$, we have
 $$1-\rho_0  z_0 X_{t_1}+\rho_0 ^2 z_0^2t_1\geq 1-2\rho_0  z_0 \sqrt{t_1}+\rho_0 ^2 z_0^2t_1 =(1-\rho_0 z_0\sqrt{t_1})^2\sim \pp{\frac{\C+c_0}{\sqrt{L}}}^2 \mbox{ as $L\to\infty$} $$
 }
\end{proof}

To determine the asymptotics of the expectation, introduce
\[
U_s := e^{-s}X_{e^{2s}}, s\in\R.
\]
This is a stationary $[-2,2]$-valued Markov process with univariate probabilities
\begin{equation}
  \label{pi:OU}
 \Pr(U_s=dy)= \frac{\sqrt{4-y^2}}{2 \pi }\inddd{|y|\leq 2} dy
\end{equation}
and transition probabilities
\begin{multline}
  \label{pst:OU}
  \Pr(U_s=dy|U_{s'}=y')\\
  =\frac{\sqrt{4-y^2}}{2 \pi }
  \frac{ e^{2 (s-s') }-1}{-2 y y'  \cosh (s-s' )+2 \cosh (2
   (s-s' ))+y^2+{y'}^2-2}dy ,
   \quad s'<s,\; y, y'\in[-2,2].
\end{multline}
So we arrive at%
\begin{multline}\label{phi2OU}
  \Phi_L \pp{\frac{\vv c}{\sqrt L},  \frac{\vv\theta}{\sqrt L}} \sim \frac{\prod_{k=1}^d v_{L,k}^{-\sigma (L_k-L_{k-1})/(2+\sigma)}}{\mathfrak C_L}
  \\
  \times
     \EE\left[  \frac{ \prod_{k=1}^d\spp{(\sigma v_{L,k}+U_{s_k/\sqrt{L}})^{L_k-L_{k-1}}\ind_{U_{s_k/\sqrt{L}}>0} }  }
     {(1-\rho_{L,0}  z_{L,0}\sqrt{t_{L,1}} U_{s_1/\sqrt{L}}+\rho_{L,0} ^2 z_{L,0}^2t_{L,1})(1-\rho_{L,1}  U_{s_d}/\sqrt{t_{L,d}} +\rho_{L,1}^2/t_{L,d})}\right].
\end{multline}

Introduce
\[
Y_L(s):= L\pp{2-U_{s/\sqrt L}}.
\]
This is a well-studied Markov process, and we shall explain it later. Now, we re-write the expectation on the right-hand side of \eqref{phi2OU} as
\begin{multline*}
\prodd k1d(2+\sigma v_{L,k})^{L_k-L_{k-1}} \\ \times \EE\left[  \frac{ {\prod_{k=1}^d\spp{1 - \frac{Y_L(s_k)}{(2+\sigma v_{L,k})L}}^{L_k-L_{k-1}}   \inddd{Y_L(s_k)<2L}}  }
     {\spp{(1-\rho_{L,0}  z_{L,0}\sqrt{t_{L,1}})^2 +\rho_{L,0} z_{L,0} Y_L(s_1)/L
     }\times\spp{(1-\rho_{L,1}/\sqrt{t_{L,d}})^2  +\frac{\rho_{L,1} Y_L(c_d)}{\sqrt{t_{L,d}} L}}}\right].
\end{multline*}
Grouping the first product above with the first product on the right-hand side of \eqref{phi2OU}, we arrive at
  \begin{equation}
  \label{pre-fact}
  \Phi_L \pp{\frac{\vv c}{\sqrt L},  \frac{\vv\theta}{\sqrt L}} \sim \wt \psi_{L} \pp{\frac{\vv\theta}{L^{1/2}}}
 \psi_{L} \pp{\frac{\vv c}{L^{1/2}},\frac{\vv\theta}{L^{1/2}}},
\end{equation}
where
\begin{align}
 \wt  \psi_{L} \pp{\frac{\vv\theta}{L^{1/2}}} & =\prod_{k=1}^d\left(\frac{\sigma}{2+\sigma} v_{L,k}^{2/(2+\sigma)} +\frac{2}{2+\sigma}v_{L,k}^{-\sigma/(2+\sigma)}\right)^{L_k-L_{k-1}},\nonumber\\
\label{eq:psi_L}
\psi_{L} \pp{\frac{\vv c}{L^{1/2}},\frac{\vv\theta}{ L^{1/2}}}
& = \frac{L^2(2+\sigma)^L}{ \mathfrak C_L} \esp\bb{ G_L\pp{Y_L(s_1),\dots,Y_L(s_{d})}},\end{align}
with
\[
G_L(y_1,\dots,y_{d}):=\frac{\prodd j1{d}\spp{\spp{1-\frac{y_j}{(2+\sigma v_{L,j})L}}^{L_j-L_{j-1}} \inddd{|y_j|\le 2L}}}{\pp{L(1-\rho_{L,0}  z_0\sqrt{t_{L,1}})^2 +\rho_{L,0} z_{L,0} y_1}\pp{L(1-\rho_{L,1}/\sqrt{t_{L,d}})^2  +{\rho_{L,1} y_d}/{\sqrt{t_{L,d}}}}}.
\]
It is clear that
\begin{equation}
  \label{end0}
\lim_{L\to\infty}\wt \psi_{L} \pp{\frac{\vv\theta}{L^{1/2}}}= e^{\frac{\sigma}{(2+\sigma)^2}\sum_{k=1}^d  (x_k-x_{k-1})\wt s_k^2}.
\end{equation}
This determines the second factor on the right hand side of \eqref{Lim-ph}.
For \eqref{eq:psi_L}, since $ v_{L,k}=1+\wt s_k/\sqrt{L}+O(L^{-1})$,
     \begin{align*}
     (L(1-\rho_{L,0}  z_{L,0}\sqrt{t_1})^2 +\rho_{L,0} z_{L,0} y_1) & =\left((\C+c_0)^2+y_1\right)+O(L^{-1/2}),\\     L(1-\rho_{L,1}/\sqrt{t_{L,d}})^2  +\rho_{L,1} y_d/\sqrt{t_{L,d}} & =\left((\A+c_d)^2+y_d\right)+O(L^{-1/2}),
     \end{align*}
we have
\[
\lim_{L\to\infty} G_L(y_1,\dots,y_{d})=G(y_1,\dots,y_{d}) :=\frac{ \exp\spp{-\summ j1{d}\frac{x_j-x_{j-1}}{2+\sigma}y_j}}{\left((\C+c_0)^2+y_1\right) \left((\A+c_d)^2+y_d\right)}.
\]
Let also $\pi_L(u)$ denote the marginal density of $Y_L(c_d)$. We have
\begin{align*}%
\psi_{L} \pp{\frac{\vv c}{L^{1/2}},\frac{\vv \theta}{ L^{1/2}}} & = \frac{L^2(2+\sigma)^L}{\mathfrak C_L} \esp\bb{ G_L\pp{Y_L(s_1),\dots,Y_L(s_{d})}}
\\ & \sim
\frac{L^{3/2}\sqrt{2+\sigma}}{ \sqrt{2}\mathfrak C_{\A',\C'}} \esp\bb{ G_L\pp{Y_L(s_1),\dots,Y_L(s_{d})}}\nonumber
 \\ & =\frac{\sqrt{2+\sigma}}{ \sqrt{2}\mathfrak C_{\A',\C'}}  \int _0^{2L} \esp\bb{G_L(Y_n(s_1),\dots,Y_n(s_{d-1}),u)\middle | Y_L(c_d)=u}\ L^{3/2}\pi_L(u)\,du.%
\end{align*}

Now we take a closer look at the process $\{Y_L(s)\}_{s>0}$.
This is a Markov process with the univariate law that can be computed from \eqref{pi:OU} such that
\[%
  \Pr(Y_L(s)=dv)=\pi_L(v)d v = \frac{\sqrt{v(4L-v)}}{2\pi L^2}  dv, %
\]%
 compare %
\cite[Lemma 4.2]{Bryc-Wang-2017ASEP}, and transition probabilities for $s_k>s_{k+1}$ that can be computed from \eqref{pst:OU}. Moreover, it is known \citep{Bryc-Wang-2015} that as $L\to\infty$,
\[%
\calL\pp{\pp{Y_L(s)}_{s\ge c_d}\mmid  Y_L(c_d) = u}  \fddto \calL\pp{\pp{\zeta_{s}}_{s\ge c_d}\mmid \zeta_{c_d} = u}
\]%
where we let $\zeta$ denote the Markov process with  transition probabilities $\Pr(\zeta_t\in dy|\zeta_s=x)=\p_{t-s}(x,y)dy$ given in
 \eqref{biane-p}.
In the above, $\calL(\cdot\mid\cdot)$ is understood as the law induced by the conditional law of the corresponding Markov process starting at fixed time from a fixed point $u$.

In view of the bound  $\pp{1-y/((2+\sigma)L)}^{x L}\leq \exp(-xy/(2+\sigma))$ which is valid for $0\leq y\leq 2L$, we see that
\[G_L(y_1,\dots,y_d) \leq \frac{C}{(\C+c_0)^2(\A+c_d)^2}\exp\pp{-\sum_{k=1}^d (x_k-x_{k-1})y_k/(2+\sigma)}\]
for some $C$ and large $L$. (More precisely, there is $L_0$ and $C$ such that this bound holds for all $L\geq L_0$ and all $0\leq y_k\leq 2L$, but the bound extends to all $0\leq y_k<\infty$ as $G_L(\vv y)=0$ when some $y_k>2L$.)
So either invoking  \cite[Exercise 6.6]{billingsley99convergence} or the dominated convergence theorem, we  see that
\[
\lim_{L\to\infty}\psi_{L} \pp{\frac{\vv c}{L^{1/2}}, \frac{\vv\theta}{L^{1/2}}}
=\frac{\sqrt{2+\sigma}}{ \sqrt{2}\pi \mathfrak C_{\A',\C'}}  \int _0^{\infty}
\esp\bb{G(\zeta_{s_1},\dots,\zeta_{s_d})\middle |\zeta_{s_d}=u} \sqrt{u}\, du =\psi(\vv c).
\]
Combined with \eqref{pre-fact} and \eqref{end0}, this completes the proof of \eqref{Lim-ph}.

\section{Proof of Proposition \ref{prop:1}}\label{sec:prop1}
To avoid cumbersome notation and additional technicalities, we prove Proposition \ref{prop:1} for
\begin{equation}
  \label{rho(L)+}
 \rho_{L,0}=e^{-\C/\sqrt{L}},\;  \rho_{L,1}=e^{-\A/\sqrt{L}}
\end{equation}
instead of the asymptotically equivalent expression \eqref{rho(L)}.

First, it is known, see \cite{barraquand2022steady}, that the law $\Pr_{\eta\topp{\A,\C}/\sqrt{2}}$ on $C[0,1]$ of process $\eta\topp{\A,\C}/\sqrt{2}$   is absolutely continuous with respect to the law $\Pr_{B}$ of the Brownian  motion   of variance 1/2  with the Radon-Nikodym derivative
     \begin{equation}\label{BD-21}\frac{d \Pr_{\eta\topp{\A,\C}/\sqrt{2}}}{d \Pr_B} = \frac{2}{(\A+\C)\mathfrak C_{\A,\C}} e^{ (\A+\C)\min_{x\in[0,1]}B_x-  \A  B_1 },
     \end{equation}
     where  $\mathfrak C_{\A,\C}$ is the normalizing constant \eqref{C(a,c)}. %

Next,  denote by $\vv S=\{S_1,S_2,\dots,S_L\}$ a random walk starting at $0$ with   i.i.d increments taking values in $\{\pm 1,0\}$ with probabilities  $1/(2+\sigma)$ and $\sigma/(2+\sigma)$ respectively. Introduce its partial-sum process
\begin{equation}
  \label{zeta}
  \zeta_L(t):=S_{\floor{Lt}}, t\in[0,1].
\end{equation}

The law of $\xi_L$ is absolutely continuous with respect to the law of $\zeta_L$ on $D[0,1]$, denoted by $\Pr_{\xi_L}, \Pr_{\zeta_L}$ respectively. To see this, it suffices to compare the laws of the vectors $\vv \gamma^\circ_L=\{\gamma_k-\gamma_0\}_{k=1,\dots, L}$  and $\vv S_L = (S_1,\dots,S_L)$ on $\ZZ^L$, denoted by $\Pr_{\vv\gamma^\circ_L}, \Pr_{\vv S_L}$ respectively.
 By summing over the values of $\gamma_0\in\ZZ_{\geq 0}$ such that $\min_{0\leq k\leq L}\{\gamma_k\}\geq 0$, for $\vv s=\{s_1,\dots,s_L\}$ in the support of $\vv S_L$,  the Radon-Nikodym derivative is
\[   \frac{d\Pr_{\vv \gamma^\circ_L}}{d\Pr_{\vv S_L}}( \vv s)
    =  \frac{1}{\mathsf C_L} (\rho_{L,0} \rho_{L,1})^{-\min_{k=0,\dots,L} s_k} \rho_{L,1}^{s_L},
\]
 where ${\mathsf C}_L$ is the normalizing constant and $s_0 = 0$. It then follows that
 with $\omega = \{\omega_t\}_{t\in[0,1]}\in D[0,1]$ we have
 \begin{equation}
    \label{RN-gamma}
 \frac{d\Pr_{\xi_L}}{d\Pr_{\zeta_L}}(\omega) = \frac1{\mathsf %
 C_L}(\rho_{L,0}\rho_{L,1})^{-\min_{x\in[0,1]}\omega_x}\rho_{L,1}^{\omega_1}=\frac1{\mathsf C_L}
 \calE(\omega/\sqrt{L}),
 \end{equation}
where we used \eqref{rho(L)+} and denoted $\calE(\omega):=\exp\pp{(\A+\C)\inf_{x\in[0,1]}\omega_x -{\A} \omega_1}$.
 Formula \eqref{RN-gamma} implies that for any bounded continuous function $\Phi:D[0,1]\to\RR$ we have
\begin{equation}
  \label{PhiE}
     \EE\left[\Phi\left(\frac{\xi_L}{\sqrt L}\right)\right]
   = \frac{1}{\mathsf C_L}\EE\bb{\Phi\left(\frac{\zeta_L}{\sqrt{L}}\right)\calE\pp{\frac{\zeta_L}{\sqrt L}}},
\end{equation}
Since the increments of $\vv S$ have mean zero and variance $2/(2+\sigma)$, by Donsker's theorem
$$\frac1{\sqrt{L}}\{\zeta_L(x)\}_{x\in[0,1]}\Rightarrow \frac2{\sqrt {2+\sigma}}\{B_x\}_{x\in[0,1]}$$ in  %
$D[0,1]$.
Since %
$\sup_{L=1,2,\dots} \EE\left[\calE\pp{\frac{\zeta_L}{\sqrt L}}^2\right]<\infty$
and $\Phi$ is bounded, the real random variables $\Phi\pp{\frac{\zeta_L}{\sqrt{L}}}\calE\pp{\frac{\zeta_L}{\sqrt L}}$, $L=1,2\dots$ are uniformly integrable. Uniform integrability and weak convergence imply convergence of expectations (\cite[Theorem 3.5]{billingsley99convergence}),
 so it follows that
 \begin{align}
  \lim_{L\to\infty}\EE\left[\Phi\pp{\frac{\zeta_L}{\sqrt{L}}}\calE_L\pp{\frac{\zeta_L}{\sqrt L}}\right] & = \EE\left[\Phi\pp{\frac2{\sqrt{2+\sigma}}B}\calE\pp{\frac2{\sqrt{2+\sigma}}B}\right]\nonumber\\
& = \EE\bb{\Phi\pp{\frac2{\sqrt{2+\sigma}}B}e^{(\A+\C)\inf_{t\in[0,1]}\frac2{\sqrt{2+\sigma}}B_x-\A\frac2{\sqrt{2+\sigma}}B_1}}\nonumber\\
&  = \EE\left[\Phi\pp{\frac2{\sqrt{2+\sigma}}B} e^{(\A'+\C')\min_{0\leq x\leq 1}B_x-\A'  B_1 }\right].\label{Don3}
 \end{align}
  In particular,   \eqref{Don3} with $\Phi\equiv 1$ implies that the normalizing constants converge,
  $\mathsf C_L\to  (\A'+\C')\mathfrak C_{\A',\C'}/2 $. %
Dividing \eqref{Don3} by these normalizing constants%
and using formulas  \eqref{PhiE} and \eqref{BD-21}
we get
$$
\lim_{L\to\infty} \EE\left[\Phi\left(\frac{\xi_L}{\sqrt{L}}\right)\right]
=\EE\bb{\Phi\pp{\sqrt{\frac2{2+\sigma}}\eta \topp{\A',\C'}}},
$$
for all continuous and bounded functions $\Phi$ from $D[0,1]$ to $\R$. This completes the proof of \eqref{BD-Don} under assumption \eqref{rho(L)+}. We omit the proof  of \eqref{BD-Don} under assumption \eqref{rho(L)}, as it requires cumbersome notation and  additional steps.

\subsection*{Acknowledgement}
We thank two anonymous referees for their helpful comments that have improved the paper. WB's research was partially supported by Simons Foundation/SFARI Award Number: 703475, US.
YW's research was partially supported by Army Research Office, US (W911NF-20-1-0139).
Both authors acknowledge support from the  Charles Phelps Taft Research Center at the University of Cincinnati.

\appendix
\section{Auxiliary formulas}
The following integral is known, for a derivation, see for example \cite[Lemma A.2]{Bryc-Wang-Wesolowski-2022}.
\begin{lemma}
The normalizing constant
\[%
 \mathfrak C_{\A,\C}= \int_{\RR_+^2} e^{-(\C x+\A y)/{\sqrt 2}}\g_1(x,y)dx dy
\]%
 is given by the expression
   \begin{equation}\label{eq:C_ac}
\mathfrak C_{\A,\C}
= \begin{cases}
\displaystyle {\sqrt 2}\cdot \frac{\A H(\A/2)-\C H(\C/2)}{\A^2-\C^2},& \mbox{ if } \A\ne\C, \; \A+\C>0,\\\\
\displaystyle \frac{2+\A^2 }{2\sqrt 2\A}\cdot H(\A/2) - \frac1{\sqrt{2\pi}}, & \mbox{ if } \A = \C>0,
\end{cases}
\end{equation}
  where for $x\in\RR$,
\[%
H(x) =e^{x^2}\erfc(x) \qmwith \erfc (x) = \frac 2{\sqrt\pi}\int_x^\infty e^{-t^2}dt.
\]%

\end{lemma}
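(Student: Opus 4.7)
The plan is to reduce the two-dimensional integral defining $\mathfrak C_{\A,\C}$ to one-dimensional Gaussian integrals by exploiting the fact that $\g_1(x,y)$ is written as a difference of two shifted Gaussians. Writing
\[
\mathfrak C_{\A,\C} = \frac{1}{\sqrt{2\pi}}\left(J_1 - J_2\right), \qquad J_\pm := \int_0^\infty\!\!\int_0^\infty e^{-(\C x+\A y)/\sqrt 2}\,e^{-(x\mp y)^2/2}\,dx\,dy,
\]
where $J_1$ corresponds to the $(x-y)^2$ term and $J_2$ to the $(x+y)^2$ term, I would evaluate each $J_\pm$ separately via a linear change of variables that decouples the quadratic part from the linear part.

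For $J_1$, substitute $u=x-y$, keep $y$ as the second variable; the quadratic becomes $u^2/2$, the linear part factors as $\C u/\sqrt 2+(\A+\C)y/\sqrt 2$, and the region $\{x,y>0\}$ becomes $\{y>\max(0,-u)\}$. Integrating out $y$ on each half-line splits the $u$-integral into $(0,\infty)$ and $(-\infty,0)$ pieces, and both reduce to the basic Gaussian integral
\[
\int_0^\infty e^{-cu/\sqrt 2 - u^2/2}\,du = \sqrt{\pi/2}\,e^{c^2/4}\,\erfc(c/2) = \sqrt{\pi/2}\,H(c/2),
\]
obtained by completing the square. This gives $J_1 = \frac{\sqrt\pi}{\A+\C}\bigl[H(\A/2)+H(\C/2)\bigr]$. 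For $J_2$, substitute $u=x+y$; the region becomes $\{0<y<u\}$, and the $y$-integral is an elementary exponential yielding (for $\A\neq \C$) $J_2 = \frac{\sqrt\pi}{\A-\C}\bigl[H(\C/2)-H(\A/2)\bigr]$.

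Combining and simplifying the resulting expression yields
\[
\mathfrak C_{\A,\C} = \frac{1}{\sqrt{2\pi}}(J_1-J_2) = \frac{\sqrt 2}{\A^2-\C^2}\bigl[\A H(\A/2)-\C H(\C/2)\bigr],
\]
which is the first case of \eqref{eq:C_ac}. For the diagonal case $\A=\C$, I would pass to the limit by L'Hôpital, using the differential identity $H'(x)=2xH(x)-2/\sqrt\pi$ (immediate from $H(x)=e^{x^2}\erfc(x)$ and $\erfc'(x)=-\frac{2}{\sqrt\pi}e^{-x^2}$). Differentiating $\A\mapsto \A H(\A/2)$ at $\A=\C$ and dividing by the derivative of $\A^2-\C^2$ produces $\frac{(2+\A^2)H(\A/2)}{4\A}-\frac{1}{2\sqrt\pi}$, which after multiplication by $\sqrt 2$ gives the second case of \eqref{eq:C_ac}.

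The computation is essentially routine; the only mild subtlety is ensuring convergence (which is guaranteed by $\A+\C>0$: in the case $\A<0$ one needs $u\mapsto e^{-\A u/\sqrt 2 - u^2/2}$ to remain integrable on $(0,\infty)$, which it does thanks to the Gaussian) and handling the indeterminate form at $\A=\C$. There is no serious obstacle.
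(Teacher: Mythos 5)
Your computation is correct: I checked the change of variables, the regions, the basic integral $\int_0^\infty e^{-cu/\sqrt 2-u^2/2}\,du=\sqrt{\pi/2}\,H(c/2)$, the algebraic combination giving $\sqrt2\,\frac{\A H(\A/2)-\C H(\C/2)}{\A^2-\C^2}$, and the L'H\^opital limit for $\A=\C$, and all steps work as claimed (the only implicit step is that the value at $\A=\C$ equals the limit as $\C\to\A$, which follows from continuity of the defining integral in the parameters by dominated convergence, using $\A+\C>0$). Note, however, that the paper does not actually prove this lemma: it simply cites an external reference (Lemma A.2 of Bryc--Wang--Weso{\l}owski) for the derivation. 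The cited source, and the related computations that do appear in this paper (e.g.\ the derivation of \eqref{C-no-CC} in the shaded block), go through the sine-transform representations $\frac{u}{c^2+u^2}=\int_0^\infty e^{-cz}\sin(uz)\,dz$ and the identity \eqref{trig2g} linking products of sines to the kernel $\g_t$, which is the same machinery used throughout Section 3. Your route is more elementary and self-contained --- a direct split of $\g_1$ into its two Gaussian terms and a linear change of variables --- at the cost of not connecting to the duality framework the paper relies on elsewhere; as a proof of this particular identity it is perfectly adequate.
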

The following is a minor re-write of known integrals, see \cite[Lemma 4.5]{Bryc-Kuznetsov-2021} or \cite[formula after  (4.38)]{Bryc-Wang-Wesolowski-2022}.
\begin{lemma} For $\A+\C>0$ and $\tau>0$, we have
\begin{equation}\label{C-no-CC}
    \frac{1}{2\pi}\int_0^\infty e^{-\tau v^2/2} \frac{4 v^2}{(\A^2+ v^2) (\C^2+v^2)}d v  = \sqrt{\tau}\mathfrak C_{|\A|\sqrt{2\tau},|\C|\sqrt{2\tau}}.
\end{equation}
\end{lemma}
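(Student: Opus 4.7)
The plan is to reduce \eqref{C-no-CC} to the closed-form expression \eqref{eq:C_ac} by a direct partial-fraction computation. First observe that both sides depend only on $\A^2$ and $\C^2$, so without loss of generality we may assume $\A,\C>0$. I will treat the generic case $\A\ne\C$ first, and then recover $\A=\C$ either by continuity in $\C$ or by a parallel computation.

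For $\A\ne\C$, I will use the partial fraction decomposition
\[
\frac{4v^2}{(v^2+\A^2)(v^2+\C^2)}=\frac{4\A^2}{\A^2-\C^2}\cdot\frac{1}{v^2+\A^2}-\frac{4\C^2}{\A^2-\C^2}\cdot\frac{1}{v^2+\C^2}
\]
to split the left-hand side of \eqref{C-no-CC} into a combination of the single integral
\[
I(a,\tau):=\int_0^\infty\frac{e^{-\tau v^2/2}}{v^2+a^2}\,dv,\qquad a>0.
\]
The key step is the explicit evaluation $I(a,\tau)=\frac{\pi}{2a}H(a\sqrt{\tau/2})$, where $H$ is the function defined right after \eqref{eq:C_ac}. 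I would derive this by differentiating $I(a,\tau)$ in $\tau$ to obtain the first-order linear ODE $\partial_\tau I=\frac{a^2}{2}I-\tfrac12\sqrt{\pi/(2\tau)}$, solving it with integrating factor $e^{-\tau a^2/2}$ and the boundary condition $I(a,\tau)\to 0$ as $\tau\to\infty$, and finally identifying the resulting incomplete Gaussian integral with $\erfc(a\sqrt{\tau/2})$ after the substitution $s\mapsto 2u^2/a^2$.

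Plugging this evaluation back into the partial-fraction decomposition, the left-hand side of \eqref{C-no-CC} simplifies to
\[
\frac{1}{2\pi}\cdot\frac{4}{\A^2-\C^2}\bigl(\A^2 I(\A,\tau)-\C^2 I(\C,\tau)\bigr)=\frac{\A H(\A\sqrt{\tau/2})-\C H(\C\sqrt{\tau/2})}{\A^2-\C^2}.
\]
On the other hand, applying \eqref{eq:C_ac} with arguments $\A\sqrt{2\tau},\,\C\sqrt{2\tau}$ and using the identity $\A\sqrt{2\tau}/2=\A\sqrt{\tau/2}$, one finds that the factors of $\sqrt{\tau}$ and $\sqrt{2}$ cancel and the right-hand side of \eqref{C-no-CC} produces exactly the same expression. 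This matches the two sides for $\A\ne\C$.

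The case $\A=\C>0$ can be handled in two ways. The cleanest is to note that both sides of \eqref{C-no-CC} are real-analytic in $\C^2$ in a neighborhood of $\A^2$ (the right-hand side because $\mathfrak C_{\A,\C}$ is, by inspection of \eqref{eq:C_ac}, continuous across $\A=\C$), so the equality extends by continuity. Alternatively one can repeat the partial-fraction step using $\frac{4v^2}{(v^2+\A^2)^2}=\frac{4}{v^2+\A^2}-\frac{4\A^2}{(v^2+\A^2)^2}$ and $\int_0^\infty\frac{e^{-\tau v^2/2}}{(v^2+a^2)^2}dv=-\frac{1}{2a}\partial_a I(a,\tau)$ together with $H'(x)=2xH(x)-2/\sqrt{\pi}$. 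There is no real obstacle in the proof; the only care required is careful bookkeeping of the $\sqrt{2\tau}$ factors so that the explicit formula produced by the integral evaluation matches the definition of $\mathfrak C_{\A\sqrt{2\tau},\C\sqrt{2\tau}}$.
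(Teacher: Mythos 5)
Your argument is correct, but it takes a different route from the paper. You evaluate the integral in closed form: partial fractions reduce it to $I(a,\tau)=\int_0^\infty e^{-\tau v^2/2}(v^2+a^2)^{-1}dv=\frac{\pi}{2a}H(a\sqrt{\tau/2})$ (your ODE derivation of this is sound, and the bookkeeping does check out: the left side becomes $\frac{\A H(\A\sqrt{\tau/2})-\C H(\C\sqrt{\tau/2})}{\A^2-\C^2}$, which agrees with $\sqrt{\tau}\,\mathfrak C_{\A\sqrt{2\tau},\C\sqrt{2\tau}}$ computed from \eqref{eq:C_ac}). The paper instead never touches the closed form \eqref{eq:C_ac}: it writes each factor $\frac{2v}{a^2+v^2}=2\int_0^\infty e^{-ax}\sin(vx)\,dx$, applies Fubini, uses the identity $\frac{2}{\pi}\int_0^\infty e^{-\tau v^2/2}\sin(vx)\sin(vy)\,dv=\g_\tau(x,y)$, and then the scaling $\g_{\tau}(x,y)=\frac1{\sqrt\tau}\g_1(x/\sqrt\tau,y/\sqrt\tau)$ to land directly on the defining integral \eqref{C(a,c)} of $\mathfrak C$. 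That route treats $\A=\C$ and $\A\ne\C$ uniformly (no case split, no appeal to the piecewise formula) and is the same sine-transform mechanism underlying Proposition \ref{prop:duality}; your route is more elementary but takes the explicit evaluation \eqref{eq:C_ac} as input and needs the separate continuity (or $H'(x)=2xH(x)-2/\sqrt\pi$) argument at $\A=\C$, which you supply correctly. One small point of care: the hypothesis $\A+\C>0$ allows $\A=0$ or $\C=0$; your ``WLOG $\A,\C>0$'' should be ``WLOG $\A,\C\ge 0$,'' and the boundary case is covered either by continuity or by noting that the coefficient $\frac{4\A^2}{\A^2-\C^2}$ kills the singular term $\A^2 I(\A,\tau)=\frac{\pi\A}{2}H(\A\sqrt{\tau/2})\to 0$ as $\A\to0$ — the paper glosses this point in essentially the same way.
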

\arxiv{
\begin{proof} For completeness, we include the elementary derivation based on the Laplace transform of the sine function as in the proof of \cite[Lemma 4.5]{Bryc-Kuznetsov-2021}.
   For non-zero $\A,\C$ the integral in \eqref{C-no-CC} is
   \begin{multline*}
      \frac{1}{2\pi}\int_0^\infty e^{-\tau v^2/2} \frac{4 v^2}{(\A^2+ v^2) (\C^2+v^2)}d v =
       \frac{2}{\pi}\int_0^\infty e^{-\tau v^2/2} \pp{\int_0^\infty e^{-|\A| x}\sin (v x) d x} \pp{\int_0^\infty e^{-|\C| y}  \sin (v y)d y} d v
      \\=   \int_0^\infty \int_0^\infty e^{-|\A| x-|\C| y} \pp{ \frac{2}{\pi}\int_0^\infty e^{-\tau v^2/2} \sin (v x)   \sin (v y)d v } d x d y =     \int_0^\infty \int_0^\infty  e^{-|\A| x-|\C| y}  \g_\tau(x,y)  d x d y\\=  \sqrt{\tau}\mathfrak C_{|\A|\sqrt{2\tau},|\C|\sqrt{2\tau}},
   \end{multline*}
  where we used  \eqref{trig2g}, Fubini's theorem,  and scaling $\g_{\tau x}(z,z')=\frac{1}{\sqrt{\tau}}\g_x(z/\sqrt{\tau},z'/\sqrt{\tau})$. By taking a limit (or modifying the above  calculation) the formula extends to $\A=0$ or $\C=0$.
\end{proof}

}

\def\polhk#1{\setbox0=\hbox{#1}{\ooalign{\hidewidth
  \lower1.5ex\hbox{`}\hidewidth\crcr\unhbox0}}}

\end{document}